\newtheorem{tw}{Theorem}[section]
\newtheorem{pr}[tw]{Proposition}
\newtheorem{lm}[tw]{Lemma}
\newtheorem{cor}[tw]{Corollary}
\newtheorem{rem}[tw]{Remark}
\theoremstyle{definition}
\title{K + M constructions with general overrings and relationships with polynomial composites}
\author{\L ukasz Matysiak\\
	Kazimierz Wielki University\\
	Bydgoszcz, Poland \\
	lukmat@ukw.edu.pl}
\title{K + M constructions with general overrings and relationships with polynomial composites}
\begin{document}
	
\maketitle

\begin{abstract}
		In this paper we consider the construction of $K + M$, where $K$ is the domain, $M$ is the maximal ideal of a some ring of polynomials with coefficients from the field $L$, where $K$ is its subring. In addition to the usual domains, we also consider the Noetherian, Pr\"ufer and GCD-domains. In particular, polynomial composites are a case of $K + M$ construction. In this paper we will find numerous construction conclusions related to polynomial composites.
\end{abstract}

\begin{table}[b]\footnotesize\hrule\vspace{1mm}
	Keywords: domain, field, maximal ideal, polynomial.\\
	2010 Mathematics Subject Classification:
	Primary 13B25, Secondary 13G05.
\end{table}

\section{Introduction}
	
D.D.~Anderson, D.F.~Anderson, M. Zafrullah in \cite{xx1} called object $K+XL[X]$ as a composite for $K\subset L$ fields. There are a lot of works where composites are used as examples to show some properties. But the most important works are presented below.

\medskip

In 1976 \cite{y1} authors considered the structures in the form $D+M$, where $D$ is a domain and $M$ is a maximal ideal of ring $R$, where $D\subset R$. Later (in \cite{mm1}), we proved that in composite in the form $D+XK[X]$, where $D$ is a domain, $K$ is a field with $D\subset K$, that $XK[X]$ is a maximal ideal of $K[X]$. 
Next, Costa, Mott and Zafrullah (\cite{y2}, 1978) considered composites in the form $D+XD_S[X]$, where $D$ is a domain and $D_S$ is a localization of $D$ relative to the multiplicative subset $S$. 
Zafrullah in \cite{y3} continued research on structure $D+XD_S[X]$ but 
he showed that if $D$ is a GCD-domain, then the behaviour of $D^{(S)}=\{a_0+\sum a_iX^i\mid a_0\in D, a_i\in D_S\}=D+XD_S[X]$ depends upon the relationship between $S$ and the prime ideals $P$ od $D$ such that $D_P$ is a valuation domain (Theorem 1, \cite{y3}).
In 1991 there was an article \cite{1} that collected all previous composites and the authors began to create a theory about composites creating results. In this paper, the structures under consideration were officially called as composites. 
After this article, various minor results appeared. But the most important thing is that composites have been used in many theories as examples. 
We have researched many properties of composites in \cite{mm1} and \cite{mm2} and \cite{mm3}.

\medskip

Suppose that $T$ is a domain and $L$ is a field that is a retract of $T$, that is, suppose $T=L+M$, where $M$ is a maximal ideal of $T$. Each subring $K$ of $L$ determines a subring $R=K+M$ of $T$. This construction has been studied extensively in two situations. The first systematic investigation of the properties of $R$ is due to R.Gilmer (\cite{8}, Appendix 2, p.558) and Gilmer and W. Heinzer \cite{9}, who required that $T$ be a valuation domain. A similar investigation has been conducted under the hypothesis that $T=L[X]$, $M=XL[X]$, and $L$ is the quotient field of $K$ \cite{4}. The interest in this case arises because $R$ is the symmetric algebra of the $K$-module $L$. In both cases the properties of $R$ are related to those of $K$; in the case of a valuation domain, the relationship of $K$ to $L$ also plays an important role. In this paper, we investigate the costruction described above, without placing any limitations on $T$.

\medskip

More specifically, we focus attention on four properties: we obtain necessary and sufficient conditions for $R$ to be a coherent domain, a Pr\"ufer domain, a Noetherian domain, and a GCD-domain. What is most satisfying is that the conditions are expressed solely in terms of the properties of the components of the construction. If $L$ is the quotient field of $K$, it is also possible to describe the prime ideal lattice of $R$ and thus to compute the Krull dimension of $R$. If $R$ is a Prufer domain, so are $K$ and $T$. Their ideal class groups are shown to be related by a short exact sequence. This yields conditions for $R$ to be a B\'ezout domain. Unfortunately, if $R$ is a Pr\"ufer domain it has the $n$-generator property whenever $K$ and $T$ do. Thus, this construction casts no light on whether invertible ideals in Pr\"ufer domains can require more than two generators. The paper concludes with a brief consideration of methods for obtaining domains $T$ of the form $L+M$ that satisfy the conditions of the theorems.

\medskip

It is undoubtedly possible to characterize other properties. We have limited ourselves to these four because they have received attention in the special contexts investigated earlier, and because they seem adequate to demonstrate that such problems can often be handled in more generality than had previously seemed feasible. It seems quite likely that at least some of the results of this paper can be extended to a somewhat more general situation. As Gilmer \cite{8} noted, the assumption that $L$ is a retract of $T$ is often not essential. Instead, it can be assumed that $L=T/M$, in which case $R$ is replaced by the pullback of $T$ and $K$. However, we have chosen to follow Gilmer's lead in this regard, and for the sake of clarity and simplicity we limit ourselves to the case of a retract.

\medskip

Our interest in this problem was kindled by the paper of D. Dobbs and I. Papick \cite{5}, which gives necessary and sufficient conditions for $R$ to be coherent when $T$ is a valuation domain.

\section{Results}

We have arranged the following considerations and results in such a way that we present $K + M$ constructions with general overrings. After each result, we put the relationship with polynomial composites in the form of a corollary.

\medskip

The letters $T$, $L$, $M$, $K$, and $R$ will retain throughout the paper the meanings assigned to them in the opening paragraph. It being necessary to exclude the situations wherein the construction degenerates, we also assume without further mention that $M\neq 0$ and $K\neq L$.

\begin{lm}
	\label{l1}
	If there exists a nonzero ideal $A$ of $T$ that is finitely generated as an $R$-module, then $K$ is a field and $[L\colon K]<\infty$.
\end{lm}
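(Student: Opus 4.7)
The plan is to study the quotient $A/MA$, which I will argue is simultaneously a nonzero $L$-vector space and a finitely generated $K$-module; from this both conclusions of the lemma drop out. The $L$-vector space structure comes from the fact that $M$ annihilates $A/MA$, so $T/M \cong L$ acts. The $K$-module generation comes from any $R$-module generating set $a_1,\ldots,a_n$ of $A$: since $R = K + M$, their images in $A/MA$ still generate as a module over $R/M$, and because $L \cap M = 0$ we have $R/M \cong K$.

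The first technical step is to show $A/MA \neq 0$. I cannot invoke Nakayama's lemma directly, since it is not a priori clear that $M$ lies in the Jacobson radical of $R$ (and this is closely related to what we are trying to prove). Instead, assuming $MA = A$, I would write each $a_i = \sum_j m_{ij} a_j$ with $m_{ij} \in M$ (using $MR = M$ since $M$ is an ideal of $R$), assemble the $m_{ij}$ into a matrix $C$, and apply the determinant trick to conclude $\det(I-C)\cdot a_i = 0$ for every $i$. Expanding the determinant shows $\det(I-C) = 1 + m$ for some $m \in M$, which is nonzero in $T$ because $M$ is proper; since $T$ is a domain, this forces $A = 0$, contradicting the hypothesis.

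Once $A/MA$ is known to be a nonzero $L$-vector space that is also finitely generated as a $K$-module, I would pick an $L$-basis so that $A/MA \cong L^d$ with $d \geq 1$, and observe that projection onto any coordinate is a $K$-linear surjection onto $L$. Hence $L$ itself is a finitely generated $K$-module, which in particular makes $L$ integral over $K$. Since $L$ is a field and $K \subseteq L$ is an integral extension of domains, the standard lemma forces $K$ to be a field as well; finite $K$-module generation of $L$ then translates to $[L:K] < \infty$.

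The step I expect to be the main obstacle is precisely the argument that $A \neq MA$: the usual Nakayama hypothesis is unavailable, so one must exploit that $T$ itself is a domain (not merely $R$) to carry out the determinant trick successfully. Everything after that is essentially linear algebra together with the standard fact about integral extensions of domains.
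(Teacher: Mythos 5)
Your proposal is correct and follows essentially the same route as the paper: both arguments hinge on showing $MA \neq A$ and then exploiting that $A/MA$ is a nonzero $L$-vector space which is finitely generated over $R/M \cong K$, so that $L$ is a finite $K$-module, $K$ is a field by integrality, and $[L\colon K]<\infty$. The only difference is in how $MA\neq A$ is obtained --- the paper localizes at $M$ in $T$ and applies Nakayama in the local ring $T_M$, while you run the determinant trick directly in the domain $T$, which is the same underlying computation.
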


\begin{proof}
Clearly, $A$ is finitely generated over $T$, and hence $MA\neq A$. For otherwise, $MT_M\cdot AT_M = AT_M$ and therefore $AT_M=0$, by Nakayama’s lemma. This is impossible, since $0\neq A\subseteq AT_M$. It follows that $A/MA$ is a nonzero $(T/M = L)$-module that is finitely generated as an $(R/M = K)$-module. Since $L$ is a field, $A/MA$ can be written as a direct sum of copies of $L$. Thus, $L$ is a finitely generated $K$-module. But then $K$ is a field, since the field $L$ is integral over $K$ and obviously $[L\colon K]<\infty$. 	
\end{proof}

\begin{cor}
	\label{cor1}
	If there exists a nonzero ideal $A$ of $L[X]$, where $L$ be a field, that is finitely generated as an $K+XL[X]$-module, then $K$ is a field and $[L\colon K]<\infty$.
\end{cor}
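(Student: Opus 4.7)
The plan is to apply Lemma~\ref{l1} directly, after checking that the polynomial composite setting of the corollary is just a special case of the general $K+M$ construction of the lemma. I would make the identification $T = L[X]$ and $M = XL[X]$. Since $L[X]/XL[X] \cong L$ is a field, $M$ is a maximal ideal of $T$, and the inclusion $L \hookrightarrow L[X]$ as constant polynomials, together with the evaluation map $X \mapsto 0$, exhibits $L$ as a retract of $T$; thus $T = L + M$ as required in the opening paragraph of Section~2. The subring $K \subseteq L$ then produces the subring $R = K + M = K + XL[X]$, which is precisely the polynomial composite appearing in the statement of the corollary.

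Under this translation, the hypothesis of the corollary — that there is a nonzero ideal $A$ of $L[X]$ which is finitely generated as a $K + XL[X]$-module — is literally the hypothesis of Lemma~\ref{l1}. Invoking the lemma immediately yields the conclusion that $K$ is a field and $[L\colon K] < \infty$.

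There is essentially no obstacle: the proof reduces to one sentence once the dictionary between the two settings is recorded. The only things worth verifying explicitly are the maximality of $XL[X]$ in $L[X]$ (immediate from the quotient being a field) and the direct-sum decomposition $L[X] = L \oplus XL[X]$ which realizes the retract structure (immediate from writing each polynomial as its constant term plus $X$ times a polynomial). Everything else — the Nakayama step, the passage to $A/MA$ as an $L$-module finitely generated over $K$, and the conclusion that $K$ must be a field with $L/K$ finite — is inherited verbatim from the proof of Lemma~\ref{l1}.
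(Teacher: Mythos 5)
Your proposal is correct and matches the paper's intent exactly: the corollary is simply Lemma~\ref{l1} specialized to $T=L[X]$, $M=XL[X]$, and your verification that $XL[X]$ is maximal with $L[X]=L+XL[X]$ is all that is needed.
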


For the purpose of casting the next result in generality sufficient to cover all the situations that arise, we use the following terminology from \cite{14}. A domain $S$ with quotient field $L$ is called a finite conductor domain if for each pair $x, y\in L$ we have $xS\cap yS$ is a finitely generated $S$-module. Every coherent domain is a finite conductor domain, as is every GCD-domain (\cite{3}, Theorem 2.2) and (\cite{8}, Theorem B, p. 605).

\begin{pr}
	\label{p2}
If $R$ is a finite conductor domain, then exactly one of the following conditions holds:
\begin{itemize} 
	\item[(a) ] $K$ is a field, $[L\colon K]<\infty$, and $M$ is a finitely generated ideal of $T$.
	\item[(b) ] $L$ is the quotient field of $K$ and $T_M$ is a valuation domain.
\end{itemize}	
\end{pr}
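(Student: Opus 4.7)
The plan is to split on whether $L$ equals the quotient field of $K$, proving (a) in the negative case and (b) in the positive one; the two alternatives are automatically mutually exclusive, since $K$ being a field forces $K$ to equal its own quotient field, which by the standing hypothesis differs from $L$.

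In the case where $L$ properly contains the quotient field of $K$, I would pick some $\ell \in L$ outside the quotient field of $K$ and compute $R \cap \ell R$. An element $\ell(k+m)$ with $k \in K$, $m \in M$ lies in $R$ iff $\ell k \in L \cap R = K$ (using $\ell m \in LM = M \subseteq R$); the choice of $\ell$ then forces $k = 0$, so $R \cap \ell R = \ell M$. Finite conductor gives $\ell M$ finitely generated over $R$, hence so is $M$ via multiplication by $\ell^{-1}$, and Lemma~\ref{l1} applied with $A = M$ yields (a).

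In the case where $L$ is the quotient field of $K$, I would first note that $T = (K \setminus \{0\})^{-1} R$ is a localization of $R$, whence $T_M = R_M$, so the goal becomes to show $R_M$ is a valuation domain. Given $x$ in the common quotient field, finite conductor says the $R$-ideal $A = \{r \in R : xr \in R\}$ is finitely generated, and writing $x = a/b$ with $a, b \in R$, $b \ne 0$, we have $b \in A$, so $A \ne 0$. If some generator of $A$ lies outside $M$ (hence is a unit of $R_M$) then $x \in R_M$ at once. So one may assume $A \subseteq M$, and symmetrically that the analogous ideal for $x^{-1}$ lies in $M$; the main technical point -- which I expect to be the central obstacle -- is to enhance $A$ into a $T$-ideal in order to invoke Lemma~\ref{l1}.

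To carry out this promotion, I would write $R$-module generators of $A$ as $r_i \in M$ and decompose $xr_i = k_i + m_i$ with $k_i \in K$, $m_i \in M$. If some $k_i \ne 0$, then $xr_i = k_i(1 + k_i^{-1} m_i)$ is a unit in $R_M$ (since $k_i \in K \setminus \{0\} \subseteq R \setminus M$ is a unit in the local ring $R_M$, and $1 + k_i^{-1} m_i \in 1 + M R_M$ is a unit there as well), whence $x^{-1} = r_i (xr_i)^{-1} \in R_M$, contradicting the assumption on $x^{-1}$. So all $xr_i \in M$, i.e.\ $xA \subseteq M$. From this one directly checks that both $LA \subseteq A$ and $MA \subseteq A$ (the key computations being $x(\ell r) = \ell(xr) \in LM = M$ for $\ell \in L$ and $x(mr) = m(xr) \in M^2 \subseteq M$ for $m \in M$), making $A$ a nonzero $T$-ideal which is finitely generated as an $R$-module. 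Lemma~\ref{l1} then forces $K$ to be a field with $[L:K] < \infty$, and combined with $L$ being the quotient field of $K$ this yields $L = K$, contradicting $K \ne L$. This contradiction shows $T_M$ is a valuation domain, establishing (b).
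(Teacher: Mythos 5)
Your proof is correct, and while its first half is essentially the paper's argument, its second half takes a genuinely different route. In the case where $L$ properly contains the quotient field of $K$, the paper chooses $x\in L$ with $xK\cap K=(0)$ and computes $xR\cap R=M$ directly; you compute $R\cap\ell R=\ell M$ and transfer finite generation through multiplication by $\ell^{-1}$, which is the same argument up to a unit of $T$ (note only that $M$ finitely generated as an $R$-module is a fortiori finitely generated as an ideal of $T$, which is what (a) asserts). In the case where $L$ is the quotient field of $K$, the paper stays inside $T$: for nonzero $a,b\in T$ it shows $aR\cap bR\neq aM\cap bM$, because equality would make the nonzero $T$-ideal $aM\cap bM$ finitely generated over $R$, contradicting Lemma \ref{l1}; it then picks $x$ in the difference and reads off comparability of $aT_M$ and $bT_M$ from a nonvanishing $K$-component. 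You instead pass to $R_M$ via the identification $T_M=R_M$ (valid since $T=R_{K\setminus\{0\}}$, $K\setminus\{0\}\subseteq R\setminus M$ and $MT=M$ --- worth a line in a final write-up), take an arbitrary $x$ in the quotient field, and work with the conductor $A=\{r\in R\colon xr\in R\}$; assuming neither $x$ nor $x^{-1}$ lies in $R_M$ gives $A\subseteq M$ and, after the unit argument with $k_i(1+k_i^{-1}m_i)$, also $xA\subseteq M$, and these two containments are exactly what make $A$ a nonzero $T$-ideal finitely generated over $R$, so Lemma \ref{l1} yields the contradiction $L=K$. One more small justification to make explicit: finite generation of $A$ is not literally the finite conductor hypothesis but follows from it because multiplication by $x$ is an $R$-module isomorphism of $A$ onto $xR\cap R$. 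Comparing the two, the paper's version is shorter since it only ever compares principal ideals generated by elements of $T$, while yours verifies the classical valuation criterion on the whole quotient field and isolates the conductor-promotion trick, which is a reusable observation; both ultimately rest on the same combination of the finite conductor property and Lemma \ref{l1}, and your handling of mutual exclusivity of (a) and (b) is fine.
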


\begin{proof}
	If $L$ is not the quotient field of $K$, then there is an $x$ ($0\neq x\in L$) such	that $xK\cap K = (0)$. Clearly, $xR = xK + xM = xK+M$, since $x$ is a unit in $T$ and $M$ is an ideal of $T$. Now $R$ is a finite conductor domain, and $xR$ and $R$ are principal fractional ideals of $R$. Therefore, $xR\cap R$ is a finitely generated $R$-module. But
	$$ xR\cap R = (xK+M)\cap (K+M) = (xK\cap K) + (M\cap M) = M.$$
	Hence, $M$ is a finitely generated ideal of $T$, and by Lemma \ref{l1}, (a) holds.
	
	\medskip
	
	If $L$ is the quotient field of $K$, let $a$ and $b$ be nonzero elements of $T$. Now $aR\cap bR \supseteq aM\cap bM$, and the latter is a nonzero ideal of $T$. Moreover, since $R$ is a finite conductor domain and $L$ is the quotient field of $K$, it follows from Lemma \ref{l1} that
	$$aR\cap bR\neq aM\cap bM.$$
	Choose $x\in (aR\cap bR)\setminus (aM\cap bM)$. Write $x = (k_1 + m_1)a = (k_2 + m_2)b$ with
	$k_1, k_2\in K$ and $m_1, m_2\in M$. One of the elements $k_1$ and $k_2$ is nonzero, say
	$k_1\neq 0$. Since $k_1 + m_1\notin M$, $k_1 + m_1$ is a unit in $T_M$. Therefore,
	$$a = (k_1 + m_1)^{-1} (k_2 + m_2)b\in bT_M,$$
	and thus $aT_M\subseteq bT_M$. It follows that $T_M$ is a valuation domain.
\end{proof}

\begin{cor}
	\label{cor2}
	If $K+XL[X]$ is a finite conductor domain, then exactly one of the following conditions holds:
		\begin{itemize} 
			\item[(a) ] $K$ is a field, $[L\colon K]<\infty$, and $XL[X]$ is a finitely generated ideal of $T$.
			\item[(b) ] $L$ is the quotient field of $K$ and $L[X]_{XL[X]}$ is a valuation domain.
		\end{itemize}	
\end{cor}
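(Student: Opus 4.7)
The plan is to invoke Proposition \ref{p2} directly, taking $T = L[X]$ and $M = XL[X]$. First I would check that the standing hypotheses of the excerpt are satisfied under this identification: $L[X]$ is a domain, $L$ is a field that embeds as a retract via the decomposition $L[X] = L + XL[X]$, and $XL[X]$ is a maximal ideal of $L[X]$ whose residue field is canonically identified with $L$. The global assumptions $M \neq 0$ and $K \neq L$ carry over verbatim, and the ring $R = K + XL[X]$ falls into the $K + M$ framework studied in the paper.

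With this identification in place, the hypothesis that $K + XL[X]$ is a finite conductor domain is exactly the hypothesis of Proposition \ref{p2}, so the dichotomy follows. Translating alternative (a) of the proposition under $T \leftrightarrow L[X]$ and $M \leftrightarrow XL[X]$ produces statement (a) of the corollary word for word, and alternative (b) translates into the statement that $L$ is the quotient field of $K$ and $L[X]_{XL[X]}$ is a valuation domain, which is statement (b).

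There is no real obstacle here; the corollary is a direct specialization. The only remark worth making is that $XL[X]$ is always a principal ideal of $L[X]$, generated by $X$, so the clause that $XL[X]$ be a finitely generated ideal of $L[X]$ in (a) is automatic and could be omitted without loss. I would leave it in the statement only to preserve the visible parallelism with Proposition \ref{p2}.
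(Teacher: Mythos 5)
Your proposal is correct and matches the paper's intent: the corollary is exactly the specialization of Proposition \ref{p2} to $T=L[X]$, $M=XL[X]$ (with $L[X]=L+XL[X]$ and $XL[X]$ maximal), which is precisely how the paper treats it. Your side remark that $XL[X]=XL[X]\cdot 1$ is principal, so the finite-generation clause in (a) is automatic, is also accurate.
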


This is a convenient juncture for recording some observations that we shall use frequently.

\medskip

$R$ and $T$ have the same quotient field. This is a general remark about integral domains that have a nonzero ideal in common.

\medskip

If $T$ is integrally closed, the integral closure of $R$ is $J + M$, where $J$ is the integral closure of $K$ in $L$. This follows easily from the fact that $R$ and $T$ have the same quotient field.

\medskip

If $K$ is a field and $[L\colon K]<\infty$ then $T$ is a finite $R$-module. Indeed, if $\{1, b_2, \dots, b_n\}$ is a field basis for $L/K$, then $\{1, b_2, \dots, b_n\}$ is an $R$-module
generating set for $T$.

\medskip

If $K$ is the quotient field of $D$, then $T= R_{K\setminus\{0\}}$ is a localization of $R$. Moreover, $R$ is a faithfully flat $K$-module. That no maximal ideal of $K$ blows up in $R$
is obvious; moreover, as a $K$-module, $R$ is the direct sum of $K$ and a $K$-module, namely $M$, which is a direct sum of copies of $L$, a flat $K$-module.

\medskip

We come now to our first theorem. Recall that a domain $S$ is coherent if direct products of flat $S$-modules are flat. Other characterizations include “finitely generated ideals are finitely presented” and “any two finitely generated ideals of $S$ have finite intersection” (\cite{3}, Theorems 2.1 and 2.2). Thus, Noetherian domains and Prufer domains are coherent.

\begin{tw}
\label{t3}
The following conditions are equivalent:
\begin{itemize} 
	\item[(1) ] $R$ is coherent.
	\item[(2) ] $T$ is coherent and exactly one of the following holds:
\begin{itemize} 
	\item[(a) ] $M$ is a finitely generated ideal of $T$, $K$ is a field, and $[L\colon K]<\infty$.
	\item[(b) ] $L$ is the quotient field of $K$, $K$ is coherent, and $T_M$ is a valuation domain.	
\end{itemize}
\end{itemize} 
\end{tw}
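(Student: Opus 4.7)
The plan is to combine Proposition \ref{p2} with transfer-of-coherence arguments between $R$, $T$, and $K$ in each of the two structural cases.

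For $(1)\Rightarrow(2)$, since coherent domains are finite conductor domains, Proposition \ref{p2} gives the dichotomy and only the coherence clauses remain. In case (a), the observations preceding the theorem show that $T$ is a finite $R$-module (as $K$ is a field and $[L\colon K]<\infty$); combined with $M$ finitely generated over $T$, this makes $T$ a finitely presented $R$-algebra, so coherence transfers from $R$ to $T$ by standard results for coherence through module-finite, finitely presented extensions. In case (b), $T=R_{K\setminus\{0\}}$ is a localization of $R$, hence coherent for free. For coherence of $K$, I use that $R$ is a faithfully flat $K$-module: given a finitely generated ideal $I$ of $K$ with presentation $0\to N\to K^n\to I\to 0$, tensoring with $R$ (flat over $K$) yields $0\to N\otimes_K R\to R^n\to IR\to 0$, and coherence of $R$ forces $IR$ finitely presented; thus $N\otimes_K R$ is f.g.\ over $R$, and faithfully flat descent gives $N$ f.g.\ over $K$.

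For $(2)\Rightarrow(1)$, I argue in each case that any finitely generated ideal $I$ of $R$ is finitely presented. In case (a), the finite, finitely presented structure of $T$ over $R$ permits a descent of coherence from $T$ to $R$: a finite presentation of the $T$-ideal $IT$ (from coherence of $T$) translates into a finite presentation of $I$ over $R$ via the finite $R$-module structure of $T$. In case (b), I use the pullback description $R\cong K\times_L T$ coming from $T\twoheadrightarrow T/M=L$ and $K\hookrightarrow L$. A f.g.\ ideal $I$ of $R$ fits into $0\to I\cap M\to I\to I/(I\cap M)\to 0$: the quotient is a f.g.\ ideal of $R/M = K$, hence f.p.\ by coherence of $K$; and $IT$ is a f.g.\ ideal of the coherent ring $T$, hence f.p.\ over $T$, while the valuation hypothesis on $T_M$ makes $IT_M$ principal and thereby controls the relations involving $I\cap M$. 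Assembling these pieces along the pullback yields the desired finite presentation of $I$ over $R$.

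The main obstacle I anticipate is the assembly step in case (b) of $(2)\Rightarrow(1)$: combining the finite presentation of $I/(I\cap M)$ over $K$ with the control over $I\cap M$ coming from the coherent ring $T$ and the valuation structure of $T_M$, to produce a genuine finite presentation of $I$ over $R$ itself. The pullback framework is correct, but lifting relations from $L$ back to $R$ through the maximal ideal $M$ crucially uses the valuation hypothesis, which guarantees that finitely generated ideals of $T_M$ are principal and so keeps the relations between $I\cap M$ and the generators of $I$ tractable. A secondary but more routine technical point is the symmetric transfer in case (a), that coherence descends between $R$ and its module-finite, finitely presented extension $T$.
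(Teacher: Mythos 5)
Your forward direction $(1)\Rightarrow(2)$ is essentially the paper's argument (Proposition~\ref{p2} for the dichotomy, Harris-type ascent of coherence to the module-finite extension $T$ in case (a), localization for $T$ and faithfully flat descent over $K$ in case (b)), and it is fine. The genuine gaps are in the converse. In case (a) you assert that coherence descends from $T$ to $R$ because ``a finite presentation of the $T$-ideal $IT$ translates into a finite presentation of $I$ over $R$ via the finite $R$-module structure of $T$.'' That mechanism does not work: passing from $I$ to $IT$ loses information (in general $I\neq IT\cap R$), and descent of coherence along a module-finite extension is not a formal or off-the-shelf fact. The paper has to work for it: it first proves that $T$ is a finitely presented $R$-module by exhibiting the explicit presentation $0\to M^{n-1}\to R^n\to T\to 0$ (this is where the hypothesis that $M$ is finitely generated and the field basis of $L/K$ enter), and then proves coherence of $R$ through the flat-products characterization, using $\prod_{\alpha}(E_{\alpha}\otimes_R T)\cong(\prod_{\alpha}E_{\alpha})\otimes_R T$ (valid because $T$ is finitely presented), coherence of $T$, and Ferrand's descent of flatness along the finite injection $R\subseteq T$. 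None of this is in your sketch.

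In case (b) the ``assembly step'' you yourself flag as the main obstacle is precisely the missing proof. The extension $0\to I\cap M\to I\to I/(I\cap M)\to 0$ does not give finite presentation of $I$ from finite presentation of the quotient: an extension of a finitely presented module by a merely finitely generated one need not be finitely presented, and you have not shown $I\cap M$ is finitely generated over $R$ (that is essentially as hard as the theorem itself). The paper avoids this route entirely and instead verifies Chase's criterion that the intersection of two finitely generated ideals $A,B$ of $R$ is finitely generated: $(A\cap B)T$ is finitely generated over the coherent ring $T$ with generators chosen inside $A\cap B$, so it suffices to show $(A\cap B)/M(A\cap B)$ is finitely generated over $K$; this is extracted from the sequence $(A\cap B)/M(A\cap B)\xrightarrow{\alpha}(A/MA)\oplus(B/MB)\xrightarrow{\beta}(A+B)/M(A+B)\to 0$, the fact (coming from the valuation hypothesis on $T_M$) that $C/MC$ has $L$-dimension at most one, which is used to prove $\alpha$ is monic, and Chase's theorem together with coherence of $K$ to see that the two ends of $\beta$ are finitely presented, so $\ker\beta\cong(A\cap B)/M(A\cap B)$ is finitely generated. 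So while your outline names the right hypotheses, both halves of $(2)\Rightarrow(1)$ are missing the actual arguments that make them true.
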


\begin{proof}

($\Rightarrow$)\ \ By Proposition \ref{p2}, two cases arise.

\medskip

If $K$ is a field, $[L\colon K]<\infty$, and $M$ is a finitely generated ideal of $T$, then $T$ is a finite $R$-module. It follows from (\cite{11}, Corollary 1.5, p. 476) that $T$ is coherent.

\medskip

If $L$ is the quotient field of $K$, then $T$, being a localization of $R$, is coherent, and by Proposition \ref{p2}, $T_M$ is a valuation domain. To see that $K$ is coherent, one can show directly that the intersection of two finitely generated ideals is finitely generated, or, given a finitely generated ideal $I$ of $K$, one can use the faithful flatness of $R$ over $K$ to descend the finite presentation of $IR = I\otimes_K R$ to a finite presentation of $I$.

\medskip

\noindent 
($\Leftarrow$)\ \ Suppose the conditions of (a) hold. We shall need the following general remark. Let $\{1, b_2, \dots, b_n\}$ be a field basis for $L/K$. If $\phi$ is the $R$-homomorphism from $R^n$ to $T$ given by $\phi(r_1, \dots, r_n)=\sum r_ib_i$, then $\phi$ is surjective and the kernel of $\phi$ is isomorphic to $M^{n-1}$. The only statement that needs justification is the one about the kernel. Write $r_i = k_i + m_i$. Then $\phi(r_1, \dots, r_n) = 0$ if and only if $\sum (k_i + m_i)b_i = 0$, which implies $\sum k_ib_i = 0$; thus $k_1 = \dots = k_n = 0$, since
$1, b_2, \dots, b_n$, are $K$-linearly independent. Also, $\sum m_ib_i = 0$, which entails $m_1 = -\sum_2^n m_ib_i$. The isomorphism from $M^{n-1}$ onto the kernel of $\phi$ is given by
$$(m_2, \dots, m_n)\mapsto \big( -\sum_2^n m_ib_i, m_2, \dots, m_n\big).$$

Therefore,

$$0\rightarrow M^{n-1}\rightarrow R^n\rightarrow T\rightarrow 0$$

\noindent
is a presentation of $T$ as an $R$-module, and since $M$ is finitely generated, $T$ is a finitely presented $R$-module. To show that $R$ is coherent, we shall argue that direct products of flat $R$-modules are flat. Thus, let $\{E_{\alpha}\}$ be a collection of flat $R$-modules. For each $a$, the $T$-module $E_{\alpha}\otimes_R T$ is $T$-flat, and since $T$ is coherent, $\prod_{\alpha} (E_{\alpha}\otimes_R T)$ is $T$-flat. But since $T$ is finitely presented, 
$$\prod_{\alpha} (E_{\alpha}\otimes_R T)\cong (\prod_{\alpha} E_{\alpha})\otimes_R T$$

(\cite{1}, Exercise 9, p. 43). By the descent lemma of D. Ferrand (\cite{7}, p. 946), $\prod_{\alpha} E_{\alpha}$ is $R$-flat. 

\medskip

Suppose the conditions of (b) hold. Since $T_M$ is a valuation domain, $C/MC$ is a $L$-vector space of dimension at most $1$ for each ideal $C$ of $T$. Indeed, let $a, b\in C\setminus MC$. Then either $a/b$ or $b/a$ lies in $T_M$, say
$$a/b = t/(l+m)\quad (t\in T, m\in M, l\in L\setminus\{0\}).$$

Then $(l+m)a = bt$ or, what is the same thing, $a =l^{-1}tb-l^{-1}ma$. Thus $a = (l^{-1}t)b + MC$.

\medskip

Now, let $A$ and $B$ be nonzero, finitely generated ideals of $R$. Then $AT\cap BT = (A\cap B)T$ is finitely generated, say by $c_1, \dots, c_n\in A\cap B$. This is possible since $T$ is a localization of $R$. Since
$$R(c_1, \dots, c_n)\supseteq MR(c_1, \dots, c_n)=MT(c_1, \dots, c_n)=MT(A\cap B)=M(A\cap B),$$

\noindent 
if we can show that $(A\cap B)/M(A\cap B)$ is finitely generated over $R$, it will follow that $A\cap B$ is finitely generated. It is clearly sufficient to prove that $(A\cap B)/M(A\cap B)$ is finitely generated over $K$. Therefore, consider the exact sequence
$$0\rightarrow A\cap B\rightarrow A\oplus B\rightarrow A+B\rightarrow 0.$$

Tensoring with $R/M = K$, we obtain the exact sequence
	
$$(A\cap B)/M(A\cap B)\xrightarrow[]{\alpha} (A/MA)\oplus (B/MB)\xrightarrow[]{\beta} (A+B)/M(A+B)\rightarrow 0.$$
	
We claim $\alpha$ is monic. Tensoring
	
$$(A/MA)\oplus (B/MB)\xrightarrow[]{\beta}(A+B)/M(A+B)\rightarrow 0$$

\noindent 
with $T$ or $L$, we see that the sequence

$$(TA/MA)\oplus (TB/MB)\xrightarrow[]{T\otimes\beta}T(A+B)/M(A+B)\rightarrow 0$$

\noindent 
is exact. Since $A$ and $B$ are finitely generated, the $L$-dimension of $TC/MC$ is $1$ for $C=A$, $B$, or $A+B$. Therefore $T\otimes\beta$ is not monic, and therefore $\beta$ is not monic. Now the kernel of $\beta$ is a nonzero submodule of the torsion-free $K$-module $(A/MA)\oplus (B/MB)$, each factor being embeddable in $L$, the quotient field of $K$. But $\alpha$ maps onto the kernel of $\beta$, and $(A\cap B)/M(A\cap B)$ is embedded in one copy of $L$. This proves that $\alpha$ is monic.

\medskip

Now the $K$-modules at both ends of $\beta$ are finitely generated submodules of direct sums of copies of $L$, and consequently they are finitely presented (\cite{3}, Theorem 2.1). It follows that the kernel $(A\cap B)/M(A\cap B)$ of $\alpha$ is finitely generated (\cite{1}, Lemma 9, p. 21).
\end{proof}

\begin{cor}
	\label{cor3}
	The following conditions are equivalent:
	\begin{itemize} 
		\item[(1) ] $K+XL[X]$ is coherent.
		\item[(2) ] $L[X]$ is coherent and exactly one of the following holds:
	\begin{itemize} 
		\item[(a) ] $XL[X]$ is a finitely generated ideal of $L[X]$, $K$ is a field, and $[L\colon K]<\infty$.
		\item[(b) ] $L$ is the quotient field of $K$, $K$ is coherent, and $L[X]_{XL[X]}$ is a valuation domain.	
	\end{itemize}
\end{itemize} 
\end{cor}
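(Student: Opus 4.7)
The plan is to obtain Corollary \ref{cor3} as a direct specialization of Theorem \ref{t3}. The key observation is that the pair $(T, M) = (L[X], XL[X])$ satisfies exactly the hypotheses imposed on $(T,M)$ at the start of Section 2. First I would verify that $L$ is a retract of $L[X]$: the evaluation homomorphism $L[X] \to L$ sending $X \mapsto 0$ splits the inclusion $L \hookrightarrow L[X]$, and its kernel is $XL[X]$. Consequently $L[X] = L + XL[X]$, the ideal $XL[X]$ is maximal with residue field $L[X]/XL[X] \cong L$, and $XL[X] \neq 0$; the assumption $K \neq L$ is in force by the standing hypothesis of the paper.

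With this identification, the ring $R$ of Theorem \ref{t3} becomes precisely $R = K + XL[X]$, and the localization $T_M$ becomes $L[X]_{XL[X]}$. So the equivalence (1)$\iff$(2) in Theorem \ref{t3} translates verbatim into the statement of Corollary \ref{cor3}: coherence of $K+XL[X]$ is equivalent to coherence of $L[X]$ together with the dichotomy that either
\begin{itemize}
\item[(a)] $XL[X]$ is finitely generated over $L[X]$, $K$ is a field, and $[L:K] < \infty$, or
\item[(b)] $L$ is the quotient field of $K$, $K$ is coherent, and $L[X]_{XL[X]}$ is a valuation domain.
\end{itemize}

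The main obstacle, if any, is simply the bookkeeping of confirming that no hidden hypothesis of Theorem \ref{t3} is violated. I would also remark, for the reader's convenience, that $L[X]$ is always coherent (in fact Noetherian when $L$ is a field, hence even when $L$ has no extra structure the polynomial ring is a PID), so condition (2) simplifies in practice; similarly, $XL[X]$ is always principal, so part (a) of (2) reduces to ``$K$ is a field and $[L:K] < \infty$.'' These simplifications are not strictly needed for the corollary as stated, but they illustrate that once Theorem \ref{t3} is in hand, no further argument beyond the substitution $T=L[X]$, $M=XL[X]$ is required.
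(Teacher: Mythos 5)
Your proposal is correct and matches the paper's (implicit) approach exactly: the corollary is obtained by specializing Theorem \ref{t3} to $T=L[X]$, $M=XL[X]$, after checking that $L$ is a retract of $L[X]$ via the evaluation at $X=0$ and that $XL[X]$ is a nonzero maximal ideal with residue field $L$. Your added observations that $L[X]$ is Noetherian (hence coherent) and $XL[X]$ is principal are harmless simplifications beyond what the corollary states.
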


The Noetherian case is much easier to handle.

\begin{tw}
	\label{t4}
	The following conditions are equivalent:
	\begin{itemize} 
		\item[(1) ] $R$ is Noetherian.
		\item[(2) ] $T$ is Noetherian and $K$ is a field with $[L\colon K]<\infty$.
	\end{itemize} 	
\end{tw}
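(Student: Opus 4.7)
The plan is to bootstrap both implications from material already in hand: Lemma \ref{l1} handles the forward direction, and the Eakin--Nagata theorem (together with the module-finiteness observation recorded just before Theorem \ref{t3}) handles the converse. The statement is appreciably easier than the coherent case of Theorem \ref{t3}, because once $R$ is Noetherian every ideal is finitely generated for free, which lets us collapse the dichotomy of Proposition \ref{p2} to its case (a) immediately.

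For $(1)\Rightarrow(2)$, the key move is to apply Lemma \ref{l1} not to some carefully engineered $A$, but to $A=M$ itself. Since $M$ is a nonzero ideal of $T$ that is in particular an ideal of the Noetherian ring $R$, it is automatically finitely generated as an $R$-module. Lemma \ref{l1} then hands over in one stroke that $K$ is a field and $[L\colon K]<\infty$. To see that $T$ is Noetherian, invoke the observation preceding Theorem \ref{t3}: with $K$ a field and $[L\colon K]<\infty$, the ring $T$ is a finitely generated $R$-module. Since $R$ is Noetherian, $T$ is then a Noetherian $R$-module, so every ideal of $T$, being an $R$-submodule of $T$, is finitely generated over $R$ and a fortiori over $T$; hence $T$ is Noetherian as a ring.

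For $(2)\Rightarrow(1)$, the same observation turns the hypotheses into the statement that $T$ is a finitely generated $R$-module and $T$ is Noetherian. Applying the Eakin--Nagata theorem to the inclusion $R\subseteq T$ yields that $R$ is Noetherian.

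There is no substantive obstacle here; the only point requiring a little care is to bypass Proposition \ref{p2} and its case analysis. Case (b) of that proposition is not excluded by coherence of $R$ alone, but it \emph{is} excluded once $R$ is Noetherian, precisely because $M$ itself then meets the hypothesis of Lemma \ref{l1}. Recognising this is what makes the proof short and direct.
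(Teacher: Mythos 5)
Your proof is correct and follows essentially the same route as the paper: the forward direction applies Lemma \ref{l1} to $A=M$ (finitely generated since $R$ is Noetherian) and then uses module-finiteness of $T$ over $R$, while the converse is exactly Eakin's theorem applied to the module-finite extension $R\subseteq T$. The only cosmetic difference is that you spell out explicitly why $T$ is Noetherian in the forward direction, which the paper leaves implicit.
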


\begin{proof} 
	($\Rightarrow$) \ \ Since $M$ is a finitely generated ideal of $R$, it follows from Lemma \ref{l1} that $K$ is a field and $[L\colon K]<\infty$. Thus, $T$ is module-finite over the Noetherian ring $R$.
	
	\medskip
	
	\noindent 
	($\Leftarrow$)\ \  $T$ is a Noetherian ring and module-finite over the subring $R$. This is the situation covered by P. M. Eakin’s Theorem \cite{6}.
\end{proof}

\begin{cor}
	\label{cor4}
	The following conditions are equivalent:
	\begin{itemize} 
		\item[(1) ] $K+XL[X]$ is Noetherian.
		\item[(2) ] $[L\colon K]<\infty$.	
	\end{itemize} 
\end{cor}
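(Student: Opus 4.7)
The plan is simply to specialize Theorem \ref{t4} to the polynomial-composite setting, taking $T=L[X]$ and $M=XL[X]$, so that $R=K+M=K+XL[X]$. The corollary will then follow at once once two small bookkeeping points are addressed.

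First, I would invoke Hilbert's basis theorem: since $L$ is a field, $L[X]$ is Noetherian, so the clause ``$T$ is Noetherian'' in condition (2) of Theorem \ref{t4} is automatic in this context. Consequently the theorem collapses to the equivalence ``$K+XL[X]$ is Noetherian $\iff$ $K$ is a field with $[L\colon K]<\infty$''. So for the forward direction, if $K+XL[X]$ is Noetherian, Theorem \ref{t4} gives that $K$ is a field and $[L\colon K]<\infty$; for the reverse direction, granted $[L\colon K]<\infty$ (together with the automatic Noetherianness of $L[X]$), Theorem \ref{t4} immediately yields that $K+XL[X]$ is Noetherian.

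Second, I would justify dropping the phrase ``$K$ is a field'' from the corollary's statement. The symbol $[L\colon K]<\infty$ is standardly read as ``$L$ is a finite-dimensional $K$-vector space'', which presupposes that $K$ is a field; alternatively, if one reads it as ``$L$ is finitely generated as a $K$-module'', then $L$ is integral over the domain $K$, and since $L$ is a field, $K$ is forced to be a field as well. Either way, the single hypothesis $[L\colon K]<\infty$ already encodes both conditions ``$K$ is a field'' and ``$[L\colon K]<\infty$'' appearing in Theorem \ref{t4}, so the two conditions (2) match. There is really no obstacle in this proof: the work was done in Lemma \ref{l1} and Theorem \ref{t4}, and the corollary is just the translation of Theorem \ref{t4} into the polynomial-composite language, with the observation that Hilbert's basis theorem makes the Noetherianness of the overring free.
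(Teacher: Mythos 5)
Your proposal is correct and is exactly the route the paper intends: the corollary is Theorem \ref{t4} specialized to $T=L[X]$, $M=XL[X]$, where Hilbert's basis theorem makes the hypothesis ``$T$ is Noetherian'' automatic and the condition ``$K$ is a field'' is absorbed into the reading of $[L\colon K]<\infty$ (or recovered from $L$ being integral over $K$). Nothing further is needed.
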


The Pr\"ufer-domain case also presents little difficulty.

\begin{tw}
	\label{t5}
	The following conditions are equivalent:
	\begin{itemize} 
		\item[(1) ] $R$ is a Pr\"ufer domain.
		\item[(2) ] $T$ is a Pr\"ufer domain, $L$ is the quotient field of $K$, and $K$ is a Pr\"ufer domain.
	\end{itemize} 	
\end{tw}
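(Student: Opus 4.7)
The plan is to handle the two implications separately, using Proposition~\ref{p2}, the observations preceding Theorem~\ref{t3}, and the standard facts that overrings and prime-ideal quotients of Pr\"ufer domains are again Pr\"ufer and that $R$ is Pr\"ufer if and only if $R_{\mathfrak{m}}$ is a valuation domain for every maximal ideal $\mathfrak{m}$ of $R$.

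For $(1)\Rightarrow(2)$, a Pr\"ufer domain is coherent and hence a finite conductor domain, so Proposition~\ref{p2} applies to $R$. Its case~(a), in which $K$ is a field with $[L\colon K]<\infty$, is ruled out as follows: the integral closure of $R$ equals $J+M$, where $J$ is the integral closure of $K$ in $L$; in case~(a) we have $J=L$, so the integral closure is $L+M=T\neq R$, contradicting that Pr\"ufer domains are integrally closed. Hence case~(b) holds and $L$ is the quotient field of $K$. Then $T=R_{K\setminus\{0\}}$ is an overring of $R$, and hence Pr\"ufer; and $K=R/M$ is a quotient of $R$ by a prime ideal, and hence Pr\"ufer (any localization $K_{\mathfrak{n}}$ at a maximal ideal $\mathfrak{n}$ of $K$ is a quotient of the valuation ring $R_{\mathfrak{m}}$, where $\mathfrak{m}$ is the preimage of $\mathfrak{n}$, by a prime, and therefore is again a valuation ring).

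For $(2)\Rightarrow(1)$, I aim to show that $R_{\mathfrak{m}}$ is a valuation domain for every maximal ideal $\mathfrak{m}$ of $R$. Since $L$ is the quotient field of $K$ and $K\neq L$, $K$ is not a field; and primes of $R$ strictly contained in $M$ correspond under $T=R_{K\setminus\{0\}}$ to primes of $T$ strictly below $M$, so they are dominated in $R$ by $\mathfrak{n}+M$ for any nonzero prime $\mathfrak{n}$ of $K$. Hence every maximal $\mathfrak{m}$ of $R$ contains $M$. Fix such an $\mathfrak{m}$ and put $\mathfrak{n}=\mathfrak{m}/M$, maximal in $K$. Every $s\in R\setminus\mathfrak{m}$ satisfies $\bar s\in K\setminus\mathfrak{n}$, so in particular $\bar s\neq 0$ and $s$ is a unit in $T_M$; thus $R\subseteq R_{\mathfrak{m}}\subseteq T_M$, and reducing modulo $M$ yields $R_{\mathfrak{m}}/MR_{\mathfrak{m}}=K_{\mathfrak{n}}$. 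I then identify $R_{\mathfrak{m}}$ with the pullback $\{t\in T_M:\bar t\in K_{\mathfrak{n}}\}$; the nontrivial inclusion $MT_M\subseteq R_{\mathfrak{m}}$ follows from the identity $m/w=(m/l)/(1+m'/l)$ for $m\in M$ and $w=l+m'\in T\setminus M$ with $l\in L\setminus\{0\}$, where $m/l$ and $m'/l$ lie in $M$ because $M$ is a $T$-ideal and $1/l\in L\subseteq T$, and where $1+m'/l\in R$ has image $1\notin\mathfrak{n}$ in $K$, hence lies in $R\setminus\mathfrak{m}$.

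With this pullback description, $R_{\mathfrak{m}}=K_{\mathfrak{n}}+MT_M$ is a classical $V+M'$ construction in which $V=K_{\mathfrak{n}}$ is a valuation domain (because $K$ is Pr\"ufer) sitting inside the residue field $L$ of the valuation domain $T_M$ (because $T$ is Pr\"ufer and $M$ is maximal in $T$). An ``$x$ or $1/x$'' argument---apply it first in $T_M$ to place $x$ or $1/x$ there, and then modulo $MT_M$ in $K_{\mathfrak{n}}$---now shows $R_{\mathfrak{m}}$ is a valuation domain. The main obstacle I anticipate is the pullback identification, and specifically finding the correct rewriting of an element of $MT_M$ as $r/s$ with $r\in R$ and $s\in R\setminus\mathfrak{m}$: the trick of multiplying by $1/l$ cleanly handles this, whereas the naive attempt to write $l=c/d$ with $c,d\in K$ and $c\notin\mathfrak{n}$ fails precisely when $l\in\mathfrak{n}\subseteq K$.
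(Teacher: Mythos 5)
Your $(1)\Rightarrow(2)$ direction is essentially the paper's argument (rule out case (a) of Proposition~\ref{p2} by integral closedness, get $T$ Pr\"ufer as a localization), with one harmless variation: the paper obtains the Pr\"ufer property of $K$ either directly or by faithful flatness of $R$ over $K$, while you pass through $K_{\mathfrak{n}}\cong R_{\mathfrak{m}}/MR_{\mathfrak{m}}$, a quotient of a valuation ring by a prime; that works (your appeal to ``integral closure of $R$ equals $J+M$'' only needs the inclusion $L+M\subseteq$ integral closure, which holds without assuming $T$ integrally closed). For $(2)\Rightarrow(1)$ you take a genuinely different route from the paper: the paper shows every finitely generated ideal $I$ is flat, by noting $IT$ is $T$-projective and $I/MI$ is torsion-free over the Pr\"ufer domain $K$, and then descending flatness via Ferrand's lemma; you instead try to verify that $R_{\mathfrak{m}}$ is a valuation domain at every maximal ideal, identifying $R_{\mathfrak{m}}$ with the classical pullback $K_{\mathfrak{n}}+MT_M$. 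That local route can be made to work, and your pullback computation (including the $1/l$ trick giving $MT_M\subseteq R_{\mathfrak{m}}$) is correct, but as written there is a genuine gap.

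The gap is the claim ``hence every maximal $\mathfrak{m}$ of $R$ contains $M$.'' This is false, even under hypothesis (2): your argument only accounts for primes of $R$ comparable with $M$, but primes of $R$ contracted from primes of $T$ not contained in $M$ need not lie under $M$ and can be maximal in $R$. Concretely, take $K=\mathbb{Z}$, $L=\mathbb{Q}$, $T=\mathbb{Q}[X]$, $M=X\mathbb{Q}[X]$ (so (2) holds, $\mathbb{Q}[X]$ being a PID); then $Q=(X-1)\mathbb{Q}[X]\cap R$ is the kernel of the surjection $R\to\mathbb{Q}$, $f\mapsto f(1)$, hence a maximal ideal of $R$, and $X\notin Q$, so $Q\not\supseteq M$. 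Fortunately the missing case is easy and should simply be added: if $\mathfrak{m}$ is a maximal ideal of $R$ with $M\not\subseteq\mathfrak{m}$, then $\mathfrak{m}\cap K=(0)$ (a nonzero $k\in\mathfrak{m}\cap K$ would force $M=k\cdot k^{-1}M\subseteq\mathfrak{m}$ because $k^{-1}M\subseteq M\subseteq R$), so $\mathfrak{m}$ survives in $T=R_{K\setminus\{0\}}$, say $\mathfrak{m}=P\cap R$ with $P=\mathfrak{m}T$ prime in $T$, and $R_{\mathfrak{m}}=T_P$ is a valuation domain because $T$ is Pr\"ufer. With that case supplied, your pullback argument for the maximal ideals containing $M$ completes a correct, though different, proof of $(2)\Rightarrow(1)$.
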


\begin{proof}
	($\Rightarrow$)\ \  Since $T$ is a localization of $R$, $T$ is a Pr\"ufer domain. Moreover, since Pr\"ufer domains are integrally closed, $L$ is the quotient field of $K$, by Proposition \ref{p2}. That finitely generated ideals of $K$ are invertible may be seen directly, or one can argue this, using the fact that the faithfully flat $K$-module $R$ is a Pr\"ufer domain.
	
	\medskip
	
	\noindent 
	($\Leftarrow$)\ \ Given a finitely generated nonzero ideal $I$ of $R$, we must show that $I$ is a
	projective $R$-module (\cite{2}, Proposition 3.2, p. 132). What comes to the same thing, since $R$ is a domain (\cite{12}, Corollary 3.2, p. 108), is to show that $I$ is a flat $R$-module. Now $IT = I\otimes_R T$ is $T$-projective, since $T$ is a Pr\"ufer domain. Moreover,
	$$0\neq I/MI\subseteq IT/MTI\cong IT\otimes_T (T/M)\cong IT\otimes_T L,$$
	a $L$-vector space. In particular, $I/MI$ is a torsion-free $K$-module, and $K$ is a Pr\"ufer domain. Consequently, $I/MI$ is $K$-flat, and it follows from the descent lemma of Ferrand that $I$ is $R$-flat (\cite{7}, p. 946).
\end{proof}

\begin{cor}
	\label{cor5}
	The following conditions are equivalent:
	\begin{itemize} 
		\item[(1) ] $K+XL[X]$ is a Pr\"ufer domain.
		\item[(2) ] $L[X]$ is a Pr\"ufer domain, $L$ is the quotient field of $K$, and $K$ is a Pr\"ufer domain.
	\end{itemize} 
\end{cor}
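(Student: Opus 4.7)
The plan is to obtain this corollary as a direct specialization of Theorem \ref{t5}, taking $T=L[X]$ and $M=XL[X]$. First I would verify that this choice fits the standing framework of the paper: the quotient $L[X]/XL[X]$ is canonically isomorphic to $L$, so $XL[X]$ is a maximal ideal of $L[X]$, and the inclusion $L\hookrightarrow L[X]$ provides a retract, giving $L[X]=L+XL[X]$. Thus for any subring $K\subsetneq L$, the ring $R=K+XL[X]$ is exactly the construction to which Theorem \ref{t5} applies, with $M=XL[X]\neq 0$ and $K\neq L$ as required.

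Next I would simply read off both implications. For ($1\Rightarrow 2$): assuming $K+XL[X]$ is a Pr\"ufer domain, Theorem \ref{t5} immediately yields that $L[X]$ is a Pr\"ufer domain, that $L$ is the quotient field of $K$, and that $K$ is a Pr\"ufer domain. For ($2\Leftarrow 1$): given the three conditions in (2), Theorem \ref{t5} directly concludes that $R=K+XL[X]$ is a Pr\"ufer domain.

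Because this is purely an instantiation, there is essentially no obstacle; the only thing worth a sentence of commentary is that the hypothesis $M\neq 0$ is automatic here (since $XL[X]\neq 0$), and the nondegeneracy assumption $K\neq L$ is the genuinely interesting case of the corollary (if $K=L$ then $R=L[X]$ and the statement is trivial). I would not need to re-prove anything from Theorem \ref{t5}; in particular the use of the descent lemma of Ferrand, the faithful flatness of $R$ over $K$, and the structure of $T_M$ are all handled inside the theorem and require no adaptation in this polynomial setting.
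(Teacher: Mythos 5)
Your proposal is correct and matches the paper's (implicit) argument exactly: the corollary is obtained by instantiating Theorem \ref{t5} with $T=L[X]$ and $M=XL[X]$, after noting that $L[X]=L+XL[X]$ fits the standing framework. Your added remark that $XL[X]\neq 0$ and $K\neq L$ are the nondegeneracy hypotheses is a reasonable sanity check, though one could also observe that the condition ``$L[X]$ is a Pr\"ufer domain'' in (2) is automatic since $L[X]$ is a PID.
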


Recall that the class group $\mathcal{C}(S)$ of a Pr\"ufer domain $S$ is the multiplicative group of invertible fractional ideals of $S$ modulo the subgroup of nonzero principal fractional ideals. The class group may also be regarded as the multiplicative group of isomorphism classes of invertible fractional ideals of $S$. In the construction of this paper, the class groups of the Prufer domains $R$, $K$, and $T$ are nicely related.

\begin{pr}
	\label{p6}
	If $R$ is a Pr\"ufer domain, there exists an exact sequence
	$$1\rightarrow\mathcal{C}(K)\xrightarrow[]{\alpha}\mathcal{C}(R)\xrightarrow[]{\beta}\mathcal{C}(T)\rightarrow 1,$$
	where $\alpha[J]=[JR]$ and $\beta[I]=[IT]$ for all finitely generated fractional ideals $J$ of $K$ and $I$ of $R$. Here, $[I]$ denotes the isomorphism class of the ideal $I$.
\end{pr}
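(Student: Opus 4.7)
The plan is to establish four things in turn: (i) $\alpha$ and $\beta$ are well-defined group homomorphisms; (ii) $\beta$ is surjective; (iii) $\alpha$ is injective; (iv) $\ker\beta = \operatorname{im}\alpha$. Well-definedness and multiplicativity in (i) are routine: isomorphism classes of invertible fractional ideals are cosets modulo principal fractional ideals, and both extension of scalars $J \mapsto JR$ and $I \mapsto IT$ commute with such scaling and with products of ideals. For (ii), an invertible ideal in the Pr\"ufer domain $T$ equals $(t_1,\dots,t_n)T$ for finitely many nonzero $t_i$, and then $I := (t_1,\dots,t_n)R$ is a nonzero finitely generated ideal of the Pr\"ufer domain $R$, hence invertible, with $IT$ equal to the given ideal. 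The easy containment $\operatorname{im}\alpha \subseteq \ker\beta$ also follows quickly: for any nonzero fractional $K$-ideal $J \subseteq L$, any nonzero $j \in J$ has $j^{-1} \in L \subseteq T$, so $j$ is a unit of $T$ and $JT = T$.

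For injectivity of $\alpha$ (condition (iii)), I would exploit the $K$-module identity $JR/MJR \cong J$. The key observations are $JM = M$ (since $J$ contains a unit of $T$, $JM \supseteq jM = M$, while $JM \subseteq LM = M$) and $L \cap M = 0$; together they give $JR = J + M$, $MJR = M$, and hence $JR/MJR = (J+M)/M \cong J/(J\cap M) = J$. On the other hand, if $JR = rR$ is principal, then $MJR = rM$, and the map $R/M \to rR/rM$, $x \mapsto rx$, is an $R$-module isomorphism (injective because $r$ is a unit in the common quotient field). Combining the two presentations yields $J \cong R/M = K$, so $J$ is principal.

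The substantive work is the reverse containment (iv). Given $[I] \in \ker\beta$, pick $t$ with $IT = tT$ and replace $I$ by $t^{-1}I$ so that $IT = T$. Then $I \subseteq T$ (any $i \in I$ lies in $IT = T$), and $MI$ is a $T$-submodule of $T$, so $MI = (MI)T = M(IT) = MT = M$; in particular $M \subseteq I$. Set $J := I \cap L$; this is a $K$-submodule of $L$ because for $k \in K$ and $j \in J$ one has $kj \in KI \cap KL \subseteq I \cap L = J$. The inclusion $J + M \subseteq I$ is clear, and for the reverse, each $i \in I \subseteq T = L + M$ decomposes as $i = \ell + m$ with $\ell \in L$, $m \in M \subseteq I$, forcing $\ell = i - m \in I \cap L = J$. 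Hence $I = J + M = JR$ (the second equality from $JM = M$, noting $J \neq 0$ since $I = M$ would give $IT = M \neq T$). Finally $J$ is invertible over $K$: it is torsion-free in $L$, finitely generated (as $J \cong I/MI$ and $I$ is finitely generated over $R$), and of generic rank one, so invertible over the Pr\"ufer domain $K$. Thus $\alpha[J] = [I]$.

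The main obstacle is the exactness-in-the-middle step (iv): one must simultaneously recover a genuine fractional ideal $J$ of $K$ from the $R$-ideal $I$ \emph{and} verify $JR = I$. The reduction $IT = T$ is the essential starting point, after which the two small identities $MI = M$ and $JM = M$ carry all the weight, both ultimately reflecting that $L \setminus \{0\}$ consists of units of $T$.
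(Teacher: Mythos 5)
Your proof is correct, and on the two substantive steps it takes a genuinely different (and arguably cleaner) route than the paper's. For exactness at $\mathcal{C}(R)$ the paper keeps the generator $x=k+m$ of $IT$, writes each generator $k_i+m_i$ of $I$ as $(l_i+n_i)x$, and shows by hand that $I=(Kl_1+\cdots+Kl_t)(k+m)R$; you instead normalize to $IT=T$, deduce $M=MI\subseteq I$, and recover the $K$-ideal intrinsically as $J=I\cap L$, after which $I=J+M=JR$ falls out of the decomposition $T=L\oplus M$. For injectivity of $\alpha$ the paper compares the two direct-sum decompositions $J+M=Kj+M$ of a principal $JR$; your module-theoretic identification $J\cong JR/MJR\cong rR/rM\cong R/M=K$ is equivalent but has the added benefit of making the finite generation of $J$ (needed to conclude $J$ is invertible over $K$) explicit, a point the paper leaves implicit. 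Your surjectivity argument lifts generators directly rather than citing $T=R_{K\setminus\{0\}}$, which is also fine. Both arguments ultimately rest on the same two facts---nonzero elements of $L$ are units of $T$ (so $JM=M$ and $MI=M$) and $L\cap M=0$---and both tacitly use Theorem \ref{t5} to know that $L$ is the quotient field of $K$ and that $K$ and $T$ are Pr\"ufer domains; it would be worth stating that dependence explicitly at the outset of your write-up.
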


\begin{proof}
	Clearly, $\alpha$ and $\beta$ are well-defined homomorphisms. Also, since the quotient field $L$ of $K$ is contained in $T$, $\beta\alpha[J] = [JRT] = [JT] = [JLT] = [T]$ for each
	fractional ideal $J$ of $K$. Thus $\alpha(\mathcal{C}(K))\subseteq \ker\beta$. Suppose $\alpha[I]\in\ker\beta$. We may assume $I$ is an integral ideal of $R$, since $[I]$ has such a representative. Therefore $IT = xT$, with $x\in T$. Since $T=R_{K\setminus\{0\}}$ we may choose $x\in I$, say $x = k + m$ with $k\in K$ and $m\in M$. Suppose $I = R(k_1 +m_1) + \dots + R(k_t + m_t)$ with $k_i\in K$ and $m_i\in M$. Then
	$$k_i + m_i = (l_i + n_i)(k + m) = l_i(k+m) + n_i(k + m),$$
	and therefore
	$$l_i(k+m) = (k_i + m_i) - n_i(k+m)\in I.$$
	Therefore, $I\supseteq R(K_{l_1} + \dots + K_{l_t})(k+m)$. Since $M$ is an ideal of $T$, we see that $l_j^{-1}M\subseteq M$ and hence $R_{l_l}\supseteq l_jM\supseteq M$. It follows that
	$$R(K_{l_1} + \dots + K_{l_t})(k+m)\supseteq M(k+m).$$
	Hence, for $1\leqslant i\leqslant t$,
	$$k_i+m_i = l_i(k + m)+n_i(k+m)\in R(K_{l_1}+\dots +K_{l_t})(k + m).$$
	But these elements generate $I$. Therefore, $I = R(K_{l_1}+\dots +K_{l_t})(k + m)$. Hence
	$$[I] = [R(K_{l_1}+\dots +K_{l_t})] = \alpha[K_{l_1} + \dots + K_{l_t}].$$
	It follows that $\ker\beta\subseteq\alpha(\mathcal{C}(K))$ and therefore that $\ker\beta\subseteq\alpha(\mathcal{C}(K))$.
	
	\medskip
	
	It follows immediately from the relation $T = R_{K\setminus\{0\}}$ that $\beta$ is an epimorphism.
	It remains only to show that $\alpha$ is monic. Suppose that $\alpha [J]=[R]$. As before, we
	may assume that $J$ is an integral ideal of $K$. Since $JM = M$ as above,
	$JR = JK + JM = J + M$. Thus, $J + M = (K + M)(k + m) = Kk + M$, because
	$(K + M)(k + m)\supseteq M$. It follows that $J = Kk$, since both sums are direct. Therefore,
	$[J] = [K]$ and $\alpha$ is monic.
\end{proof}

\begin{cor}
	\label{cor6}
	If $K+XL[X]$ is a Pr\"ufer domain, there exists an exact sequence
	$$1\rightarrow\mathcal{C}(K)\xrightarrow[]{\alpha}\mathcal{C}(K+XL[X])\xrightarrow[]{\beta}\mathcal{C}(L[X])\rightarrow 1,$$
	where $\alpha[J]=[J(K+XL[X])]$ and $\beta[I]=[I(L[X])]$ for all finitely generated fractional ideals $J$ of $K$ and $I$ of $K+XL[X]$. Here, $[I]$ denotes the isomorphism class of the ideal $I$.
\end{cor}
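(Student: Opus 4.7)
The plan is to recognize that Corollary \ref{cor6} is nothing more than the concrete specialization of Proposition \ref{p6} to the situation $T=L[X]$, $M=XL[X]$, and $R=K+XL[X]$. So rather than reproving the exact sequence from scratch, I would simply check that the standing hypotheses of the paper are satisfied in this polynomial setting and then invoke Proposition \ref{p6}.

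First I would verify the framework. The ring $T=L[X]$ is a domain, and $L$ sits inside $T$ as the constants; the evaluation homomorphism $L[X]\to L$ sending $X\mapsto 0$ is a retraction, so $L$ is a retract of $T$. Its kernel is precisely $M=XL[X]$, and since $L[X]/XL[X]\cong L$ is a field, $M$ is a maximal ideal of $T$. With $K\subset L$ a proper subring, the ring $R=K+XL[X]$ is exactly the $K+M$ construction of the paper, and the blanket assumptions $M\neq 0$ and $K\neq L$ hold.

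Next, by the hypothesis of Corollary \ref{cor6}, $R=K+XL[X]$ is a Pr\"ufer domain. Proposition \ref{p6} then applies verbatim and gives an exact sequence
$$1\rightarrow\mathcal{C}(K)\xrightarrow{\alpha}\mathcal{C}(R)\xrightarrow{\beta}\mathcal{C}(T)\rightarrow 1,$$
where $\alpha[J]=[JR]$ for finitely generated fractional ideals $J$ of $K$, and $\beta[I]=[IT]$ for finitely generated fractional ideals $I$ of $R$. Substituting $R=K+XL[X]$ and $T=L[X]$ into this sequence yields precisely the statement of Corollary \ref{cor6}, with $\alpha[J]=[J(K+XL[X])]$ and $\beta[I]=[IL[X]]$.

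There is no genuine obstacle here: the whole content of the corollary lies in Proposition \ref{p6}, and the only work is the bookkeeping of identifying $T$ with $L[X]$ and $M$ with $XL[X]$. If one wished to be fully self-contained, one could repeat the argument of Proposition \ref{p6} in the polynomial language, using $L[X]=(K+XL[X])_{K\setminus\{0\}}$ to see surjectivity of $\beta$ and using $XL[X]\cdot J=XL[X]$ for every nonzero fractional ideal $J$ of $K$ (since $L$ is the quotient field of $K$ when $R$ is Pr\"ufer, by Theorem \ref{t5}) to repeat the kernel computation; but this merely duplicates the proof already given.
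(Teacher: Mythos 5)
Your proposal is correct and matches the paper's intent: the corollary is stated without separate proof precisely because it is the direct specialization of Proposition \ref{p6} to $T=L[X]$, $M=XL[X]$, $R=K+XL[X]$, and your verification that this polynomial setting satisfies the standing hypotheses is exactly the required bookkeeping.
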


\begin{rem}
Our proof shows that when $T$ is a B\'ezout domain, each finitely generated ideal $I$ of $R$ can be written in the form $I = (l_1, \dots, l_n, 1)\cdot b\cdot R$ ($l_i\in L, b\in I$). In particular, $I$ is $R$-isomorphic to the extension of a finitely generated ideal of $K$. This is often useful in cases where $T$ is a valuation domain or $L[X]$.	
\end{rem}

\begin{tw}
	\label{t7}
The following conditions are equivalent:
\begin{itemize} 
	\item[(1) ] $R$ is a B\'ezout domain.
	\item[(2) ] $T$ is a B\'ezout domain, $L$ is the quotient field of $K$, and $K$ is a B\'ezout domain.	
\end{itemize} 
\end{tw}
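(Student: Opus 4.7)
The plan is to reduce Theorem \ref{t7} to Theorem \ref{t5} together with the exact sequence of Proposition \ref{p6}, by exploiting the standard characterization that a domain is B\'ezout if and only if it is Pr\"ufer and has trivial ideal class group. Once this identification is made, both directions become an exercise in applying results already proved.

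For the direction $(1)\Rightarrow (2)$, I would start from the observation that every B\'ezout domain is Pr\"ufer, so Theorem \ref{t5} immediately delivers that $T$ is Pr\"ufer, that $L$ is the quotient field of $K$, and that $K$ is Pr\"ufer. Since $T = R_{K\setminus\{0\}}$ is a localization of $R$, and localizations of B\'ezout domains are B\'ezout, $T$ inherits the B\'ezout property. To see that $K$ is B\'ezout, I invoke Proposition \ref{p6}: the hypothesis forces $\mathcal{C}(R) = 1$, and since $\alpha\colon \mathcal{C}(K)\to\mathcal{C}(R)$ is injective, $\mathcal{C}(K)=1$ as well. A Pr\"ufer domain with trivial class group is B\'ezout, so $K$ is B\'ezout.

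For the direction $(2)\Rightarrow (1)$, the hypotheses in particular make $T$ and $K$ Pr\"ufer and assert $L = \operatorname{Frac}(K)$, so Theorem \ref{t5} yields that $R$ is Pr\"ufer. It then remains only to show $\mathcal{C}(R) = 1$. Because $T$ and $K$ are B\'ezout, $\mathcal{C}(T)$ and $\mathcal{C}(K)$ are both trivial; exactness of
$$1\rightarrow\mathcal{C}(K)\xrightarrow{\alpha}\mathcal{C}(R)\xrightarrow{\beta}\mathcal{C}(T)\rightarrow 1$$
supplied by Proposition \ref{p6} forces $\mathcal{C}(R)=1$. Thus every finitely generated (equivalently invertible, since $R$ is Pr\"ufer) ideal of $R$ is principal, which is the B\'ezout condition.

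I do not expect a real obstacle here, since the heavy lifting was done in Theorem \ref{t5} (the Pr\"ufer equivalence) and Proposition \ref{p6} (the class-group sequence). The only point that deserves care is the invocation of ``Pr\"ufer plus trivial class group equals B\'ezout'', which is routine but should be stated explicitly so that the reader sees why exactness of the three-term sequence is exactly what is needed; once this is acknowledged, the theorem is a one-line consequence of the prior results in each direction.
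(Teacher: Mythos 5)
Your proposal is correct and follows exactly the paper's route: the paper proves Theorem \ref{t7} in one line by noting that B\'ezout domains are precisely the Pr\"ufer domains with trivial class group and combining Theorem \ref{t5} with Proposition \ref{p6}. You have merely spelled out the two directions of that same argument in more detail, which is fine.
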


\begin{proof}
Because B\'ezout domains are precisely the Pr\"ufer domains having trivial class group, the result follows from Theorem \ref{t5} and Proposition \ref{p6}.	
\end{proof}

\begin{cor}
	\label{cor7}
	The following conditions are equivalent:
	\begin{itemize} 
		\item[(1) ] $K+XL[X]$ is a B\'ezout domain.
		\item[(2) ] $L$ is the quotient field of $K$, and $K$ is a B\'ezout domain.	
	\end{itemize} 
\end{cor}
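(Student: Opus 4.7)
The plan is to obtain this corollary as a direct specialization of Theorem~\ref{t7}, with the substitutions $T = L[X]$, $M = XL[X]$, and $R = K + XL[X]$. First I would verify that the standing framework of the paper applies: the decomposition $L[X] = L + XL[X]$ exhibits $L$ as a retract of $L[X]$, and $XL[X]$ is a maximal ideal of $L[X]$ because $L[X]/XL[X] \cong L$ is a field. The blanket non-degeneracy assumptions $M \neq 0$ and $K \neq L$ hold trivially.

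The key simplifying observation is that $T = L[X]$ is automatically a B\'ezout domain: since $L$ is a field, $L[X]$ is in fact a principal ideal domain. Consequently, within condition (2) of Theorem~\ref{t7}, the clause ``$T$ is a B\'ezout domain'' is redundant in this setting, and the remaining clauses ``$L$ is the quotient field of $K$, and $K$ is a B\'ezout domain'' match condition (2) of the present corollary word for word.

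Both implications therefore fall out of the corresponding implications of Theorem~\ref{t7} with no further work; the corollary is essentially a reading-off of the general theorem in the polynomial case. There is no real obstacle to negotiate — the only point worth spelling out in the written proof is precisely the remark that $L[X]$ is always B\'ezout, which is what eliminates it as a nontrivial hypothesis and collapses the general statement to the one given here.
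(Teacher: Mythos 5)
Your proposal is correct and matches the paper's intent exactly: the corollary is the specialization of Theorem~\ref{t7} to $T=L[X]$, $M=XL[X]$, with the observation that $L[X]$ is a PID (hence B\'ezout) removing the redundant clause from condition (2). Nothing further is needed.
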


We can easily describe the prime-ideal lattice of $R$ in case $L$ is the quotient field of $K$.

\begin{pr}
	\label{p8}
Let $L$ be the quotient field of $K$. If $Q$ is a prime ideal of $R$, then either $Q=P\cap R$ for some prime ideal $P$ of $T$, or $Q = P + M$ for some prime ideal $P$ of $K$.	
\end{pr}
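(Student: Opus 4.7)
The plan is to split into two cases based on whether $M \subseteq Q$ or not, corresponding to the two alternatives in the statement.

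First, suppose $M \subseteq Q$. Then $Q/M$ is a prime ideal of $R/M = K$, so setting $P = Q/M$ (viewed as a prime ideal of $K$), we obtain $Q = P + M$ directly, giving the second alternative.

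Next, suppose $M \not\subseteq Q$. The key observation, already noted in the remarks preceding this proposition, is that because $L$ is the quotient field of $K$, we have $T = R_{K \setminus \{0\}}$. By the standard correspondence between primes of a localization and primes of the base ring disjoint from the multiplicative set, it suffices to show $Q \cap (K \setminus \{0\}) = \emptyset$; then $P = QT$ will be a prime ideal of $T$ with $P \cap R = Q$. So the main step is to establish $Q \cap K = 0$. For this, pick $m \in M \setminus Q$ and suppose for contradiction that some nonzero $k \in K$ lies in $Q$. For any $m' \in M$, the element $k^{-1} m'$ lies in $M$ because $M$ is an ideal of $T$ and $k^{-1} \in L \subseteq T$. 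Hence $m' = k \cdot (k^{-1} m') \in Q \cdot R \subseteq Q$, which shows $M \subseteq Q$, contradicting our case assumption. Therefore $Q \cap K = 0$.

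The main (minor) obstacle is this last step: one must use that $M$ is genuinely a $T$-ideal (equivalently, an $L$-module), not merely an $R$-module, so that multiplication by $k^{-1}$ for $k \in K \setminus \{0\}$ keeps us inside $M$. This is precisely the feature that distinguishes the present construction from pullback-type variants and is exactly where the hypothesis that $L$ is the quotient field of $K$ enters. Once $Q \cap K = 0$ is in hand, the assignment $P := QT$ yields a prime ideal of $T$ with $P \cap R = Q$ by the localization correspondence, completing the first alternative.
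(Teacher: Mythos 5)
Your proof is correct and follows essentially the same route as the paper: the localization correspondence for $T = R_{K\setminus\{0\}}$ handles primes with $Q\cap K=(0)$, and the computation $m = k\cdot(k^{-1}m)\in Q$ (using that $M$ is a $T$-ideal) forces $M\subseteq Q$ otherwise. The only difference is cosmetic — you split on whether $M\subseteq Q$ while the paper splits on whether $Q\cap K=(0)$ — and your write-up in fact corrects a typo in the paper, which asserts ``$Q\subseteq M$'' where $M\subseteq Q$ is meant.
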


\begin{proof}
Because $T = R_{K\setminus\{0\}}$, there exists a one-to-one correspondence between primes of $R$ that miss $K\setminus\{0\}$ and primes of $T$. On the other hand, if $Q\cap K\neq (0)$,
let $k\in Q\cap K$, $k\neq 0$. For $m\in M$, $k^{-1}m\in M\subseteq R$, and hence $m\in Q$. Therefore, $Q\subseteq M$. But $R/M=K$.	
\end{proof}

\begin{cor}
	\label{cor8}
	Let $L$ be the quotient field of $K$. If $Q$ is a prime ideal of $K+XL[X]$, then either $Q=P\cap K+XL[X]$ for some prime ideal $P$ of $L[X]$, or $Q = P + XL[X]$ for some prime ideal $P$ of $K$.	
\end{cor}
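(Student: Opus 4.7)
The plan is to recognize Corollary \ref{cor8} as the direct specialization of Proposition \ref{p8} to the polynomial composite setting, and invoke that proposition. First I would verify that the hypotheses of the general construction are met. Set $T := L[X]$ and $M := XL[X]$. Then $M$ is a maximal ideal of $T$ with $T/M \cong L$, and every polynomial $f \in L[X]$ decomposes uniquely as $f = f(0) + X\cdot g(X)$, so we have an internal direct sum $T = L + M$. Hence $L$ is a retract of $T$ in the sense used throughout Section~2. With $K \subseteq L$, the ring $R := K + M = K + XL[X]$ is exactly an instance of the general $K + M$ construction, and the standing assumptions $M \neq 0$ and $K \neq L$ of the section are in force (the latter is implicit since $L$ being the quotient field of $K$ and a proper extension is the only way the statement has content).

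Next, the hypothesis that $L$ is the quotient field of $K$ is preserved verbatim between the proposition and the corollary. Applying Proposition \ref{p8} to this particular $R$, any prime ideal $Q$ of $K + XL[X]$ falls into one of two classes: either $Q = P \cap R$ for some prime $P$ of $T = L[X]$, or $Q = P + M = P + XL[X]$ for some prime $P$ of $K$. Reading the statement's expression $Q = P \cap K + XL[X]$ as $Q = P \cap (K + XL[X])$ (which is forced by the meaning inherited from Proposition \ref{p8}), this is exactly the conclusion of the corollary.

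There is essentially no obstacle: the content of the corollary has already been established in Proposition \ref{p8}, and the only task is to identify the abstract symbols $T, M, R$ of the proposition with the concrete objects $L[X], XL[X], K + XL[X]$ of the corollary. In particular, the two key facts used in the proof of Proposition \ref{p8}---that $T = R_{K\setminus\{0\}}$ and that $k^{-1}M \subseteq M$ for every nonzero $k \in K$---both hold trivially in the polynomial setting, since every nonzero element of $K$ is a unit in $L$ and $XL[X]$ is an $L$-module. Thus no additional verification is required beyond the matching of notation.
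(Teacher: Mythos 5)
Your proposal is correct and matches the paper's approach: the paper offers no separate argument for Corollary \ref{cor8}, treating it exactly as you do, namely as the instantiation of Proposition \ref{p8} with $T=L[X]$, $M=XL[X]$, $R=K+XL[X]$. Your verification that $L[X]=L+XL[X]$ realizes $L$ as a retract and that the standing hypotheses hold is the only content needed.
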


Consequently, the lattice of prime ideals of $R$ looks like the lattice of prime ideals of $K$ “pasted” at $M$ to that of $T$. This gives the following result.

\begin{cor}
	\label{corcor}
	If $L$ is the quotient field of $K$, and if $K$ and $T$ have finite Krull dimension, then $R$ has finite Krull dimension equal to 	
	$$\max \{height_T(M) + \dim (K), dim (T)\}.$$
\end{cor}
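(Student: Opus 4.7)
The plan is to exploit the description of $\Spec(R)$ given in Proposition \ref{p8}: every prime $Q$ of $R$ is either \emph{lower} ($Q\cap K=(0)$, which via the localization $T=R_{K\setminus\{0\}}$ corresponds to a prime $QT$ of $T$) or \emph{upper} ($Q\supseteq M$, and then $Q=(Q\cap K)+M$ with $Q\cap K$ a prime of $K$), the prime $M$ itself being of both types. I would then establish the upper and lower bounds on $\dim R$ separately; finiteness is automatic since $\text{height}_T(M)\le\dim T<\infty$ and $\dim K<\infty$.

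For the upper bound I would take an arbitrary chain $Q_0\subsetneq Q_1\subsetneq\cdots\subsetneq Q_n$ of primes of $R$. If every $Q_i$ is lower, the chain pulls up bijectively to a chain of primes of $T$ under the localization correspondence, so $n\le\dim T$. Otherwise let $j$ be the smallest index with $Q_j\supseteq M$. The crux is the lemma: \emph{if $Q$ is a lower prime of $R$ not contained in $M$, then $Q+M=R$.} To prove it, pick $q=k+m\in Q$ with $k\in K\setminus\{0\}$; since $L$ is the quotient field of $K$, $k$ is a unit in $T$, so $q/k=1+m/k$ lies in $R$ (because $m/k\in M$) as well as in $QT$, and therefore in $QT\cap R=Q$ (the contraction equality holds because $Q$ is disjoint from $K\setminus\{0\}$). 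Hence $1=(q/k)-(m/k)\in Q+M$. Applied to the chain, for each $i<j$ Proposition \ref{p8} together with minimality of $j$ forces $Q_i$ to be lower, and the lemma then forces $Q_i\subseteq M$ (otherwise $Q_j\supseteq Q_i+M=R$). So $Q_0\subsetneq\cdots\subsetneq Q_{j-1}\subsetneq M$ contracts to a chain of primes of $T$ ending strictly below $M$, giving $j\le\text{height}_T(M)$, while $Q_j\subsetneq\cdots\subsetneq Q_n$ all contain $M$ and descend bijectively to a chain in $R/M=K$ of length $n-j\le\dim K$. Summing yields $n\le\text{height}_T(M)+\dim K$.

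For the matching lower bound I would exhibit the two extremal chains explicitly: a maximum chain of primes of $T$ contracts to a chain in $R$ of length $\dim T$, and the contraction of a chain $P_0\subsetneq\cdots\subsetneq P_h=M$ in $T$ (with $h=\text{height}_T(M)$) concatenated at $M$ with $M=\bar P_0+M\subsetneq \bar P_1+M\subsetneq\cdots\subsetneq \bar P_d+M$, lifted from a maximum chain $0=\bar P_0\subsetneq \bar P_1\subsetneq\cdots\subsetneq \bar P_d$ of primes of $K$ (with $d=\dim K$), produces a chain in $R$ of length $h+d$. Taking the maximum of the two bounds gives the claimed formula. The main obstacle is the key lemma of the previous paragraph, forcing every mixed chain to descend through $M$; it is the one place where the hypothesis $L=\text{Frac}(K)$ is actively used (to invert $k\in K\setminus\{0\}$ in $T$), together with the saturation identity $QT\cap R=Q$, and without these the lower and upper portions of a chain could a priori be grafted together in ways not controlled by the additive expression $\text{height}_T(M)+\dim K$.
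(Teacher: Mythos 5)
Your proof is correct and follows essentially the same route as the paper, which obtains the formula from the description of $\Spec(R)$ in Proposition \ref{p8} and the resulting ``pasting'' of the prime lattices of $K$ and $T$ at $M$. Your comaximality lemma ($Q+M=R$ for a lower prime $Q\not\subseteq M$) is exactly the detail that justifies the pasting, and hence the additive term $\mathrm{height}_T(M)+\dim(K)$, which the paper leaves implicit.
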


\begin{cor}
	\label{cor9}
	If $L$ is the quotient field of $K$, and if $K$ and $L[X]$ have finite Krull dimension, then $K+XL[X]$ has finite Krull dimension equal to 	
	$$\max \{height_{L[X]} XL[X]) + \dim (K), dim (L[X])\}.$$
\end{cor}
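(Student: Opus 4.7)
The plan is to obtain this corollary as an immediate specialization of Corollary \ref{corcor} to the polynomial composite situation, following the general pattern of the paper where each $K+M$ result has a composite counterpart. First I would set $T = L[X]$ and $M = XL[X]$, observing that $M$ is a (in fact, maximal) prime ideal of $T$ with $T/M \cong L$ via evaluation at $0$, and that the inclusion $L \hookrightarrow L[X]$ exhibits $L$ as a retract of $T$. Under this identification, $R = K + M$ is precisely the composite $K + XL[X]$, so the standing assumptions ($M \neq 0$, $K \neq L$) hold automatically.

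Next I would verify that the hypotheses of Corollary \ref{corcor} are satisfied in this setting: the assumption that $L$ is the quotient field of $K$ is part of the hypothesis of the present corollary, and the finite Krull dimensionality of $K$ and of $T = L[X]$ is also part of the hypothesis (the latter is of course trivial, since $L[X]$ is a PID of dimension $1$).

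Applying Corollary \ref{corcor} then yields directly
$$\dim(K + XL[X]) = \max\bigl\{\operatorname{height}_{L[X]}(XL[X]) + \dim(K),\ \dim(L[X])\bigr\},$$
which is exactly the asserted formula.

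There is no real obstacle here: the corollary is a pure specialization of the preceding general result, following the uniform pattern already established in the paper. One could, as an optional remark, simplify using the fact that $L[X]$ is a PID — so that $\operatorname{height}_{L[X]}(XL[X]) = \dim(L[X]) = 1$ and the formula collapses to $\dim(K+XL[X]) = \dim(K) + 1$ — but this simplification is not needed for the proof, and the author has chosen to leave the formula in the form that mirrors Corollary \ref{corcor}.
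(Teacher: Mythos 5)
Your proposal is correct and matches the paper's (implicit) argument exactly: Corollary \ref{cor9} is obtained by specializing Corollary \ref{corcor} to $T = L[X]$, $M = XL[X]$, with $L$ a retract via $L[X] = L + XL[X]$. Your optional remark that the formula collapses to $\dim(K) + 1$ since $L[X]$ is a one-dimensional PID is a fair observation, but not needed.
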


A Pr\"ufer domain $S$ is said to have the $n$-generator property if each finitely generated ideal of $S$ can be generated by $n$ or fewer elements. It is an open question whether all Pr\"ufer domains have the $2$-generator property. As the following result shows, the construction of this paper fails to shed new light on this question.

\begin{tw}
	\label{t10}
	The following conditions are equivalent:
	\begin{itemize} 
		\item[(1) ] $R$ is an $n$-generator Pr\"ufer domain.
		\item[(2) ] $T$ and $K$ are $n$-generator Pr\"ufer domains.	
	\end{itemize} 
\end{tw}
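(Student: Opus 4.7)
By Theorem~\ref{t5}, each of (1) and (2) forces $R$, $T$, $K$ to be Pr\"ufer with $L$ the quotient field of $K$, so I assume these at the outset and focus on transferring the $n$-generator condition. For $(1)\Rightarrow(2)$: since $T=R_{K\setminus\{0\}}$ is a localization of $R$, every finitely generated $T$-ideal extends from $R$ and inherits the $n$-generator bound. For $K$, given a finitely generated ideal $J\subseteq K$, nonzero elements of $J$ are units in $T$, so $JM=M$ and $JR=J+M$ is finitely generated in $R$; by (1) it has $n$ generators, and reducing modulo $M$ gives $n$ generators of $JR/M=J$ over $R/M=K$.

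For $(2)\Rightarrow(1)$, let $I$ be a finitely generated ideal of $R$; the task is to exhibit $n$ generators. There are two structural inputs: (i) since $T$ is $n$-generator, $IT=(t_1,\dots,t_n)T$ with $t_i\in I$ (clear $K\setminus\{0\}$-denominators); (ii) as in the proof of Theorem~\ref{t3}(2)(b), $I/MI$ injects into $IT/MIT$, which is a one-dimensional $L$-vector space since $T_M$ is a valuation domain and $IT_M$ is principal, so $I/MI$ is a finitely generated fractional ideal of $K$ and admits $n$ generators over $K$. The plan is to choose $x_1,\dots,x_n\in I$ satisfying simultaneously (a)~$(x_1,\dots,x_n)T=IT$ and (b)~$\bar x_1,\dots,\bar x_n$ $K$-generate $I/MI$. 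Setting $I'=(x_1,\dots,x_n)R$, property (b) gives $I'+MI=I$, and I verify $I'=I$ locally, using Proposition~\ref{p8} to enumerate maximal ideals $Q$ of $R$: for $Q\supseteq M$, $MR_Q$ lies in the Jacobson radical of the local ring $R_Q$ and Nakayama yields $I_Q=I'_Q$; for $Q\not\supseteq M$, Proposition~\ref{p8} forces $Q=P\cap R$ with $P$ a prime of $T$ different from $M$, so $Q\cap K=0$ (nonzero elements of $L$ are $T$-units), $T\subseteq R_Q$, and $R_Q=T_P$ as a valuation overring of the Pr\"ufer domain $T$; then (a) gives $IR_Q=ITR_Q=I'TR_Q=I'R_Q$.

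The main obstacle is the simultaneous realization of (a) and (b). A mere lift of $K$-generators of $I/MI$ secures (b), and secures (a) only after localizing at $M$ (via Nakayama on $T_M$), while the $T$-generators $t_i$ secure (a) but their images in the one-dimensional quotient $IT/MIT\cong L$ need not $K$-span the fractional ideal $I/MI$. My intended reconciliation is exchange-based: starting from the $t_i$, at least one $\bar t_j$ is nonzero (else $IT\subseteq MIT$, contradicting Nakayama at $M$); identify $IT/MIT$ with $L$ so that $\bar t_j=1$, so that $I/MI$ becomes a fractional $K$-ideal containing $1$; then perturb the remaining $t_i$ by elements of $MI$ so that the new residues complete $1$ to a $K$-generating set of size $n$ of $I/MI$. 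Verifying that the perturbations can be arranged without destroying $(t_1,\dots,t_n)T=IT$ at maximal ideals of $T$ other than $M$ is the technical heart of the argument.
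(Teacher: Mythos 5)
Your reduction framework (localize at the two types of maximal ideals from Proposition~\ref{p8}, Nakayama at primes containing $M$, extension to $T$ at the others) is sound and is close in spirit to the paper's own local verification. But the proof is not complete: the entire difficulty of the theorem sits in the step you yourself flag as unresolved, namely producing one set of $n$ elements of $I$ that simultaneously satisfies (a) $(x_1,\dots,x_n)T=IT$ and (b) the residues generate the relevant $K$-module. Worse, the specific "exchange" you sketch cannot work as stated: perturbing the $t_i$ by elements of $MI$ does not move their classes in $I/MI$ (nor in $IT/MIT$), and since you are relying on the injection $I/MI\hookrightarrow IT/MIT$, perturbing by anything that dies in $IT/MIT$ is equally inert. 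So the proposed reconciliation mechanism has no room to "complete $1$ to a $K$-generating set," and a genuinely different idea is needed at exactly this point.

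The paper avoids the simultaneity problem altogether by three moves you are missing. First, it reduces to the case $I\not\subseteq M$: since $R_M$ is a valuation domain, $IR_M$ is principal, generated by some $a_s$, and $I$ is $R$-isomorphic to $I(u/a_s)=Ru+Rr_2+\dots+Rr_t\not\subseteq M$; the $n$-generator property only depends on the isomorphism class of $I$. Second, instead of $I/MI$ it uses the nonzero ideal $(I+M)/M$ of $K=R/M$, writing $I+M=R(k_1+m_1)+\dots+R(k_n+m_n)+M$ with all $k_i\neq 0$. Third — the key trick — it takes $T$-generators $l_i+m_i'$ of $IT$ (arranged so all $l_i\neq 0$) and rescales each by the unit $k_il_i^{-1}\in L\subseteq T$, so that $IT=T(k_1+m_1'')+\dots+T(k_n+m_n'')$ with the \emph{same} $k_i$; no single element has to do both jobs, because at a maximal ideal $P\supseteq M$ one has $(k_i+m)R_P=k_iR_P\supseteq MR_P$ for any $m\in M$ (comparability in the valuation ring $R_P$ plus $I\not\subseteq M$), so the $M$-components are irrelevant there, while at primes with trivial intersection with $K$ the $T$-equation alone decides everything. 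Your direction $(1)\Rightarrow(2)$ is fine and essentially the paper's (localization for $T$; for $K$, passing to $R/M$, which your computation $JR=J+M$ makes explicit), and note also that, like the paper, you are implicitly assuming $L$ is the quotient field of $K$ in $(2)\Rightarrow(1)$, since Theorem~\ref{t5} needs it to conclude that $R$ is Pr\"ufer; but the missing construction of the generators is a real gap, not a routine detail.
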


\begin{proof}
($\Rightarrow$)\ \  By Theorem \ref{t5}, $T$ and $D$ are Pr\"ufer domains. Moreover, $T$ is a 	localization of $R$ and $D$ is a homomorphic image of $R$.

\medskip

\noindent 
($\Leftarrow$)\ \ By Theorem \ref{t5}, $R$ is a Prufer domain. Let $I = (a_1, \dots, a_t)$ be a nonzero finitely generated ideal of $R$. Since $R_M$ is a valuation domain, $IR_M$ is principal
generated by some $a_j$, which we may assume to be $a_s$. Then there exists $u\notin M$ such that for $2\leqslant j\leqslant t$, $a_j = (r_j/u)a_s$ with $r_s\in R$. Thus, $I$ is $R$-isomorphic to $I(u/a_s) = Ru+ Rr_2 + \dots + Rr_t$, which is an ideal of $R$ not contained in $M$. It is therefore harmless to assume that $I\not\subseteq M$.

\medskip

Thus, $(I + M)/M$ is a nonzero $(R/M)$-submodule of $R/M = K$. Since $K$ is an $n$-generator Pr\"ufer domain,
$$I+M = R(k_1 + m_1) + \dots +R(k_n+ m_n) + M$$
with $k_i\in K$, $k_i\neq 0$, and $m_i\in M$ for $1<i<n$. Because $T$ is also an $n$-generator
Pr\"ufer domain, $IT = T(l_1 + m_1) + \dots + T(l_n + m_n')$ with $l_i\in L$ and $m_i'\in M$. Now, since $I\not\subseteq M$, some $l_i\neq 0$, say $l_s\neq 0$. We may assume, in fact, that $l_i\neq 0$ for each $i$; for if this is not already the case, we can replace $l_i+m_i'$ with $(l_s + m_s) + (l_i + m_i')$. But then
$$IT = T(k_1+m_1'')+\dots + T(k_n+m_n''),$$
where $m_i''=l_i^{-1}k_im_i''$ since $k_i+m_i''=k_il_i^{-1}(l_i+m_i')$ and $k_il_i^{-1}$ is a unit in $T$.

\medskip

We claim that $I = (k_1+m_1'', \dots, k_n+m_n'')R$. It suffices to verify that this equality holds locally at each maximal ideal $P$ of $R$. By Proposition \ref{p8}, there are two types of maximal ideals, those that contain $M$ and those that have trivial intersection with $K$. If $P$ is a maximal ideal of $R$ with $P\cap K = (0)$, then $R_{PT} = T_{PT}$, and the desired equality certainly holds since it already holds when the ideals are extended to $T$. Now suppose that $P$ is a maximal ideal of $R$ with $P\supset M$. Before considering what happens in this case, note that if $A$ is an ideal of $R$ not contained in $M$, then $AR_P\supset MR_P$. This is because these ideals must be comparable, since $R_P$ is a valuation domain. But if $MR_P\supseteq A$, then
$$A\subseteq AR_P\cap R\subseteq MR_P\cap R=M,$$
since $M$ is prime in $R$. In particular, if $k\in K$, $k\neq 0$, and $m\in M$, then $(k+m)R_P\supseteq MR_P$ and $kR_P\supseteq MR_P$. Therefore, $(k+ m)R_P=kR_P$. It is apparent
from these observations that
\begin{align*} 
IR_P &= IR_P + MR_P = (k_1+m_1)R_P + \dots + (k_n + m_n)R_P + MR_P \\
&= (k_1+m_1)R_P + \dots + (k_n + m_n)R_P = (k_1, \dots, k_n)R_P\\ 
&= (k_1 + m_1'', \dots, k_n + m_n'')R_P.
\end{align*} 
\end{proof}

\begin{cor}
	\label{cor10}
	The following conditions are equivalent:
	\begin{itemize} 
		\item[(1) ] $K+XL[X]$ is an $n$-generator Pr\"ufer domain.
		\item[(2) ] $L[X]$ and $K$ are $n$-generator Pr\"ufer domains.	
	\end{itemize} 
\end{cor}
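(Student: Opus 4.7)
The plan is to obtain Corollary \ref{cor10} as a direct specialization of Theorem \ref{t10} to the polynomial setting, with the identifications $T = L[X]$, $M = XL[X]$, and hence $R = K + M = K + XL[X]$. Essentially all of the real work is encoded in Theorem \ref{t10}; the corollary just translates it.

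First I would verify that the general $K + M$ framework of the paper applies in this setting. The ring $L[X]$ admits the decomposition $L[X] = L + XL[X]$ in which $L$ is a field, $XL[X]$ is a nonzero maximal ideal of $L[X]$ with $L[X]/XL[X]\cong L$ (recorded in \cite{mm1} and mentioned in the introduction), and $L$ is a retract of $L[X]$ via evaluation at $X = 0$. Thus the standing hypotheses $M\neq 0$ and $K\neq L$ are respected, and Theorem \ref{t10} may be invoked verbatim.

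The forward implication $(1)\Rightarrow(2)$ is then immediate: if $K + XL[X]$ is an $n$-generator Pr\"ufer domain, applying Theorem \ref{t10} with the above identifications yields that $T = L[X]$ and $K$ are $n$-generator Pr\"ufer domains. For the converse $(2)\Rightarrow(1)$, I would apply the backward direction of Theorem \ref{t10}: the hypotheses that $L[X]$ and $K$ are $n$-generator Pr\"ufer domains deliver that $R = K + XL[X]$ is an $n$-generator Pr\"ufer domain. The one point that deserves a sentence is that Theorem \ref{t10}'s backward direction tacitly uses Theorem \ref{t5}, which requires $L$ to be the quotient field of $K$; in our setting this is the usual standing assumption accompanying the Pr\"ufer conclusion (and is the condition explicitly listed in Corollary \ref{cor5}).

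I do not expect any genuine obstacle. The only mildly delicate bookkeeping is matching the hypothesis ``$L$ is the quotient field of $K$'' between Corollary \ref{cor5} and the present corollary, which is resolved by regarding it as implicit in the Pr\"ufer conclusion, exactly as in the general Theorem \ref{t10}. As a side remark, since $L[X]$ is always a PID when $L$ is a field, the condition ``$L[X]$ is $n$-generator Pr\"ufer'' is automatic for every $n\geqslant 1$, so the corollary reduces in practice to the statement that $K + XL[X]$ is $n$-generator Pr\"ufer iff $K$ is.
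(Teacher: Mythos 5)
Your reduction is exactly the paper's (implicit) proof: Corollary \ref{cor10} is stated as the specialization of Theorem \ref{t10} to $T=L[X]$, $M=XL[X]$, $R=K+XL[X]$, and the paper offers no further argument, so in that respect your proposal matches it. Your side remark that $L[X]$ is a PID, so that condition (2) collapses to ``$K$ is an $n$-generator Pr\"ufer domain,'' is also correct.

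The one point to push back on is your treatment of ``$L$ is the quotient field of $K$'' in the direction $(2)\Rightarrow(1)$. There it cannot be ``regarded as implicit in the Pr\"ufer conclusion'': the Pr\"ufer property of $R$ is precisely what is being proved in that direction, so nothing in the hypotheses of (2) forces $L$ to be the quotient field of $K$. Indeed, by your own observation (2) holds whenever $K$ is an $n$-generator Pr\"ufer domain, e.g.\ $K=\mathbb{Z}$, $L=\mathbb{Q}(i)$; yet $K+XL[X]$ is then not even integrally closed (the element $i$ lies in its quotient field and satisfies $i^2+1=0$ over it, but has constant term outside $\mathbb{Z}$), hence is not Pr\"ufer by Theorem \ref{t5}/Corollary \ref{cor5}. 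So the backward implication genuinely requires ``$L$ is the quotient field of $K$'' to be added to (2), exactly as it appears in Corollaries \ref{cor5}, \ref{cor7} and \ref{cor11}. This defect is inherited from the statement of Theorem \ref{t10} itself, whose proof of $(\Leftarrow)$ invokes Theorem \ref{t5} and hence tacitly uses that hypothesis; your specialization is therefore faithful to the paper, but your attempted patch is circular rather than a repair, and the clean fix is to state the quotient-field condition explicitly in (2).
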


Our next result concerns GCD-domains, and we refer the reader to (\cite{8}, Appendix 4, p. 601) and (\cite{13}, pp. 32-33) for the relevant facts. We remark that Bezout domains and unique-factorization domains afford the most common examples of GCD-domains.

\medskip

We shall adopt the following notation. Let $S$ be a domain, and suppose that $B$ is a torsion-free $S$-module. If $0\neq b_1, b_2\in B$, we shall write $c = \inf_S(b_1,b_2)$ provided $c\in B$, $Sc\supseteq Sb_1+Sb_2$, and $Sc$ is the minimal principal $S$-submodule of $B$ containing $Sb_1 + Sb_2$. When $\inf_S(b_1, b_2)$ exists, it is unique to within multiplication by a unit of $S$. It is easily verified that $S$ is a GCD-domain if and only if
$\inf_S(l_1, l_2)$ exists for all $0\neq l_1, l_2$ belonging to the quotient field of $S$.

\begin{tw}
	\label{t11}
	The following conditions are equivalent:
	\begin{itemize} 
		\item[(1) ] $R$ is a GCD-domain.
		\item[(2) ] $T$ is a GCD-domain, $L$ is the quotient field of $K$, $K$ is a GCD-domain, and $T_M$ is a valuation domain.
	\end{itemize} 	
\end{tw}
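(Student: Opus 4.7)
For $(\Rightarrow)$, GCD-domains are both finite conductor (as noted before Proposition~\ref{p2}) and integrally closed. Proposition~\ref{p2} applies, and integral closedness rules out case~(a): the integral closure of $R$ equals $J + M$ with $J$ the integral closure of $K$ in $L$, so integral closedness of $R$ forces $J = K$, contradicting case~(a)'s premise that $K$ is a field and $L$ is a finite, hence algebraic, extension with $L \neq K$. Hence case~(b) holds: $L$ is the quotient field of $K$ and $T_M$ is a valuation domain. Combined with $T = R_{K \setminus \{0\}}$, $T$ is a localization of the GCD-domain $R$ and therefore GCD. To obtain that $K$ is GCD, I would take nonzero $k_1, k_2 \in K$, set $d = \inf_R(k_1, k_2) \in R$ (valid because $dR \subseteq R$), write $d = a + m$ with $a \in K$, $m \in M$, and exploit the $K$-module decomposition $R = K \oplus M$: writing $k_i = d e_i$ with $e_i = \alpha_i + \mu_i$ and multiplying out forces $k_i = a\alpha_i$ in $K$, giving $a \mid k_i$ and $a \neq 0$; and for any common $K$-divisor $\delta$ of $k_1, k_2$, the induced $R$-divisibility $\delta \mid d$ projects similarly to $\delta \mid a$ in $K$, whence $a = \inf_K(k_1, k_2)$.

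For $(\Leftarrow)$, given nonzero $r_1, r_2 \in R$ (the general case reduces by scaling), my plan is to construct $\inf_R(r_1, r_2)$ explicitly. Let $t = \inf_T(r_1, r_2) \in T$ and write $r_i = t\sigma_i$ with $\sigma_i \in T$ coprime in $T$. Coprimality descends to $T_M$; since $T_M$ is a valuation domain, two coprime elements cannot both be non-units there, so at least one $\sigma_i$ lies outside $M$. Decompose $\sigma_i = \alpha_i + \mu_i$ with $\alpha_i \in L$, $\mu_i \in M$; at least one $\alpha_i$ is nonzero. Because $K$ is GCD with quotient field $L$, I form $l_0 = \inf_K(\alpha_1, \alpha_2) \in L$ (with convention $\inf_K(\alpha, 0) = \alpha$). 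My candidate is $d := l_0 t$. Divisibility $d \mid r_i$ in $R$ is immediate: $r_i/d = (\alpha_i/l_0) + (\mu_i/l_0) \in K + M = R$, because $\alpha_i/l_0 \in K$ by the definition of $l_0$ and $\mu_i/l_0 \in M$ since $l_0 \in L \setminus \{0\}$ is a unit in $T$.

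The main obstacle is the minimality step. Suppose $c$ divides both $r_i$ in $R$; then $c$ divides $t$ in $T$, so $t = cu$ for some $u \in T$. Decomposing $u = \beta + \rho$ with $\beta \in L$, $\rho \in M$ and expanding $\sigma_i u = r_i/c \in R$ by isolating $L$-components yields $\alpha_i \beta \in K$ for each $i$. When $\beta \neq 0$, this means $\beta^{-1} K$ is a principal $K$-submodule of $L$ containing $K\alpha_1 + K\alpha_2$, so by minimality of $l_0 K$ among such we have $l_0 \beta \in K$, whence $d/c = l_0 u = l_0 \beta + l_0 \rho \in K + M = R$. When $\beta = 0$, $u \in M$ gives $d/c = l_0 u \in M \subseteq R$ directly. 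Either way $c \mid d$ in $R$, so $d = \inf_R(r_1, r_2)$. All four hypotheses enter: $T$ GCD supplies $t$, $K$ GCD supplies $l_0$, $L$ being the quotient field of $K$ places $l_0$ in the correct ambient, and $T_M$ valuation ensures at least one $\alpha_i \neq 0$. The delicate computational step is the $L$-component extraction from $\sigma_i u \in R$, where the interplay of $K$ and $M$ inside $R$ must be tracked precisely.
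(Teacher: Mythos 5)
Your proposal is correct, and it reaches the converse by a somewhat different route than the paper. The forward direction is essentially the paper's: Proposition~\ref{p2} plus integral closedness of GCD-domains to exclude case (a), $T=R_{K\setminus\{0\}}$ a localization of $R$, and a retract argument for $K$ --- here you actually supply the componentwise projection argument that the paper only asserts with ``$K$, being a retract of $R$, is also a GCD-domain''. For the converse, the paper first proves two observations (multiplicativity of $\inf$ under multiplication by a nonzero field element, and the formula $\inf_R(t_1,t_2)=l+m$ with $l=\inf_K(l_1,l_2)$ when not both $L$-components vanish, its minimality left as ``a straightforward calculation''), and then reduces a general pair $a,b\in R$ to a pair $t,u$ with $u\notin M$ via comparability of $aT_M$ and $bT_M$ (writing $ua=tb$). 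You instead exhibit the gcd directly as $d=l_0\cdot\inf_T(r_1,r_2)$, with $l_0=\inf_K(\alpha_1,\alpha_2)$ taken from the $L$-components of the coprime cofactors $\sigma_i=r_i/\inf_T(r_1,r_2)$; the valuation hypothesis enters through descent of coprimality to $T_M$ (a standard GCD fact, worth one line: a common divisor of $\sigma_1,\sigma_2$ in $T_M$ divides $ss_1s_2\gcd_T(\sigma_1,\sigma_2)$, a unit of $T_M$) rather than through comparability of $r_1T_M$ and $r_2T_M$. Your version buys an explicit closed form for the gcd and carries out exactly the minimality verification the paper waves off; two small points to make explicit: taking $c=1$ in your minimality step is what shows $d\in R$ (a priori $l_0t$ need not lie in $K+M$), and the appeal to ``minimality of $l_0K$'' uses that in a GCD-domain the fractional $\inf$ is the least principal module over $K\alpha_1+K\alpha_2$, not merely a minimal one. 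The paper's version avoids the coprimality-descent lemma and is shorter once its two preparatory observations are granted.
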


\begin{proof}
	($\Rightarrow$)\ \ By Proposition \ref{p2}, since GCD-domains are integrally closed (\cite{13}, 	Theorem 50, p. 33), $L$ is the quotient field of $K$ and $T_M$ is a valuation domain.
	Moreover, $T$, being a localization of $R$, is a GCD-domain, and $K$, being a retract of
	$R$, is also a GCD-domain.
	
	\medskip
	
	\noindent 
	($\Leftarrow$)\ \ We begin with two observations.
	
	\medskip
	
	First, if $S$ is an integral domain contained in a field $M$ and if $0\neq a, b, c\in M$
	are such that $\inf_S(b, c)$ exists, then $\inf_S(ab, ac)$ exists and is equal to $a\cdot \inf_S(b, c)$.
	
	\medskip
	
	To see this, note that the map $a\colon M\to M$ is an $S$-homomorphism. Moreover, the map
	is monic, because $M$ is $S$-torsion-free and epic since $M$ is $S$-divisible. Thus, the
	map is an $S$-automorphism of $M$. Consequently, it induces a one-to-one inclusion-preserving correspondence between the $S$-submodules of $M$ containing $Sb$ and $Sab$ and also between $Sc$ and $Sac$. Since corresponding submodules are isomorphic, they
	require the same number of $S$-generators. Therefore, if there is a principal $S$-submodule of $M$ minimal over $Sb +Sc$, then there is a principal $S$-submodule of $M$ minimal over $Sab + Sac$ and it is induced by multiplication by $a$.
	
	\medskip
	
	Second, let $t_1=l_1+m_1$ and $t_2=l_2+m_2$ belong to $T$, with $m\in M$ and $l_i\in L$
	and not both $l_1$, $l_2$ equal to zero. Because $T$ is a GCD-domain, $\inf_T(t_1, t_2)$ exists and has the form $l_3 + m_3$, with $m_3\in M$ and $l_3\in L$, $l_3\neq 0$, since $Tt_1 + Tt_2\not\subseteq M$.
	Furthermore, $\inf_D(l_1, l_2) = l\neq 0$ exists because $K$ is a GCD-domain with quotient
	field $L$. Because $ll_3^{-1}$ is a unit in $T$, we may assume that 
	$$\inf_T(t_1, t_2) = l+m$$
	with $m\in M$. For this choice, a straight-forward calculation shows that $l+m = inf_R (t_1, t_2)$.
	
	\medskip
	
	Now, let $a$ and $b$ be nonzero elements of $R$. Since $T_M$ is a valuation domain, $aT_M$, and $bT_M$ are comparable, say $a = (t/u)\cdot b$ with $t\in T$ and $u\in T\setminus M$, both nonzero. Thus, $ua = tb$. By our first observation, if $\inf_R(ta, tb)$ exists, then
	$\inf_R(a, b)$ exists and is equal to $(1/t)\cdot\inf_R(ta, tb)$. But $\inf_R(ta, tb) = \inf_R(ta, ua)$. Again applying our first observation, we see that if $\inf_R(t, u)$ exists, then $\inf_R(at, au)$ exists and is equal to $a\cdot \inf_R(t, u)$. By our second observation, $\inf_R(t, u)$ does exist since $u\notin M$, so that $u = m + l$, where $l\neq 0$. This completes the proof.
\end{proof}

\begin{cor}
	\label{cor11}
	The following conditions are equivalent:
	\begin{itemize} 
		\item[(1) ] $K+XL[X]$ is a GCD-domain.
		\item[(2) ] $L$ is the quotient field of $K$, $K$ is a GCD-domain, and $L[X]_{XL[X]}$ is a valuation domain.	
	\end{itemize} 
\end{cor}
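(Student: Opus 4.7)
The plan is to derive Corollary \ref{cor11} as a direct specialization of Theorem \ref{t11}, instantiating the abstract construction with $T = L[X]$ and $M = XL[X]$. In this setting the ambient ring decomposes as $L[X] = L + XL[X]$, with $XL[X]$ a maximal ideal (since $L[X]/XL[X] \cong L$ is a field), and $L$ is a retract of $L[X]$ via the constant-term map; so all standing hypotheses of the paper apply and $R = K + M = K + XL[X]$.

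The only simplification required is to remove one redundant clause from condition (2) of Theorem \ref{t11}. Namely, the condition "$T$ is a GCD-domain" becomes "$L[X]$ is a GCD-domain," which holds automatically because $L$ is a field, so $L[X]$ is a unique factorization domain and hence a GCD-domain. The remaining three conditions of Theorem \ref{t11}(2) translate verbatim: "$L$ is the quotient field of $K$," "$K$ is a GCD-domain," and "$T_M = L[X]_{XL[X]}$ is a valuation domain." These are exactly the conditions listed in Corollary \ref{cor11}(2).

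Hence both implications follow at once from Theorem \ref{t11}: ($\Rightarrow$) if $K + XL[X]$ is a GCD-domain, Theorem \ref{t11} yields the three stated conditions (the "$L[X]$ is a GCD-domain" output being automatic and suppressed); ($\Leftarrow$) given the three conditions of (2), we supplement them with the free fact that $L[X]$ is a GCD-domain and apply the converse direction of Theorem \ref{t11} to conclude that $K + XL[X]$ is a GCD-domain. No genuine obstacle arises; the only care needed is in noting explicitly that $L[X]$ being a GCD-domain need not be hypothesized.
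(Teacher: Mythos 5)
Your proposal is correct and matches the paper's (implicit) argument exactly: the corollary is the specialization of Theorem \ref{t11} to $T = L[X]$, $M = XL[X]$, where $L[X] = L + XL[X]$ with $XL[X]$ maximal, and the clause ``$T$ is a GCD-domain'' is dropped because $L[X]$ is a UFD, hence automatically a GCD-domain. Nothing further is needed.
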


We conclude by describing two large classes of domains, different from those previously studied, admitting an arbitrary field $L$ as retract, and to which the program of this paper can be applied.

\medskip

Let $S$ be an abelian, torsion-free, cancellative semigroup with $0$, and $L$ a field.
The semigroup rings $L[S]$ can be regarded as generalizations of polynomial rings.
Conditions on $S$ for $L[S]$ to be a Pr\"ufer domain, B\'ezout domain, GCD domain, and so forth are given in \cite{10}. Moreover, $L[S]$ contains a maximal ideal $M$, the so-called augmentation ideal, with the property that $L[S]=L+M$. For most choices of $S$, $L[S]$ is neither Noetherian nor a polynomial ring, and $L[S]$ is never a valuation domain.

\medskip

Finally, let $K$ be an algebraically closed field, and let $L$ be a field of algebraic functions of a single variable over $K$ having positive genus. It is well known that by intersecting all but one of the DVR’s on $L$ that contain $K$, one obtains a Dedekind domain $S$ having infinite class group and the additional property that $S=K+M$ for each maximal ideal $M$ of $S$.

\section{Declarations}

\noindent
This paper was not funded.

\medskip 

\noindent
Author declares that there is no conflict of interest.

\medskip

\noindent
Data available within the article or its supplementary materials.

\end{document}